\documentclass[12pt]{amsart}
\usepackage[active]{srcltx}
\usepackage{atveryend}
\makeatletter
\let\origcitation\citation
\AtEndDocument{\def\mycites{}%
  \def\citation#1{\g@addto@macro\mycites{#1^^J}\origcitation{#1}}}
\AtVeryEndDocument{\newwrite\citeout\immediate\openout\citeout=\jobname.cit
  \immediate\write\citeout{\mycites}\immediate\closeout\citeout}
\makeatother
\usepackage{enumerate}
\usepackage{cancel}
\usepackage{mathdots}
\usepackage{epsfig}
\usepackage{hyperref}
\usepackage{amsmath}
\usepackage{amssymb}
\usepackage{graphics} 
\usepackage{yfonts}
\usepackage[usenames,dvipsnames]{color}

\newtheorem{question}{Question}

\newtheorem{corollary}{Corollary}

\newtheorem*{remark*}{Remark}
\newtheorem{proposition}{Proposition}
\newtheorem{theorem}{Theorem}
\newtheorem*{theorem*}{Theorem}
\newtheorem{lemma}{Lemma}
\newtheorem*{example*}{Example} 

\newtheorem*{notational conventions*}{Notational Conventions} 
\setlength{\columnsep}{1cm}
\setcounter{tocdepth}{1}
\newcommand{\bZ}{{\mathbb Z}}
\newcommand{\bD}{{\mathbb D}}
\newcommand{\bR}{{\mathbb R}}
\newcommand{\bN}{{\mathbb N}}

\frenchspacing

\textwidth=13.5cm
\textheight=23cm
\parindent=16pt
\oddsidemargin=-0.5cm
\evensidemargin=-0.5cm
\topmargin=-0.5cm

%------------------------------------------------
\title[Number systems with simplicity hierarchies II]{Number systems with simplicity hierarchies: a generalization of Conway's theory of surreal numbers II}
\author{Philip Ehrlich}

\address{Department of Philosophy\\ Ohio University\\ Athens\\ OH 45701\\ U.S.A.}
\email{ehrlich@ohio.edu}

\author{Elliot Kaplan}

\address{Department of Mathematics \\ University of Illinois\\ Urbana-Champaign \\IL 61801\\ U.S.A.}
\email{eakapla2@illinois.edu}

%------------------------------------------------
\begin{document}
\maketitle

\begin{abstract} 
In \cite{EH5}, the algebraico-tree-theoretic simplicity hierarchical structure of J. H. Conway's ordered field $\bf No$ of surreal numbers was brought to the fore and employed to provide necessary and sufficient conditions for an ordered field to be isomorphic to an initial subfield of $\bf No$, i.e. a subfield of $\bf No$ that is an initial subtree of $\bf No$. In this sequel to \cite{EH5}, analogous results for ordered abelian groups and ordered domains are established which in turn are employed to characterize the convex subgroups and convex subdomains of initial subfields of ${\bf No}$ that are themselves initial. It is further shown that an initial subdomain of ${\bf No}$ is discrete if and only if it is an initial subdomain of ${\bf No}$'s canonical integer part ${\bf Oz}$ of omnific integers. Finally, extending results of \cite{EH5}, the theories of divisible ordered abelian groups and real-closed ordered fields are shown to be the sole theories of ordered abelian groups and ordered fields all of whose models are isomorphic to initial subgroups and initial subfields of ${\bf No}$.

\end{abstract} 

\bigskip

\subjclass{Primary 06A05, 03C64; Secondary 12J15, 06F20, 06F25}

\keywords{Surreal Numbers, Ordered Abelian Groups, Ordered Domains, Ordered Fields}

\tableofcontents

%------------------------------------------------

\section{Introduction}

 	J. H.
Conway \cite{CO} introduced a real-closed field of \emph{surreal numbers} embracing the reals and the ordinals as well as a great
many less familiar numbers including  $ - \omega $,  $\omega /2$,  $1/\omega $,  $\sqrt \omega $
and  $e^\omega $, to name only a few. This particular real-closed field, which
Conway calls  ${\bf No}$, is so remarkably inclusive that, subject to the proviso that
numbers---construed here as members of ordered fields---be individually definable in
terms of sets of von Neumann-Bernays-G\"{o}del set theory with global choice (NBG)
\cite{ME}, it may be said to contain ``All Numbers Great and Small.'' In this respect, ${\bf No}$ bears much the same relation to ordered fields that the ordered field $\mathbb{R}$ of real numbers bears to Archimedean ordered fields (\cite{EH3}, \cite{EH5}, \cite{EH8}). 

In addition to its inclusive structure as an ordered field,  ${\bf No}$ has a rich \emph{simplicity hierarchical (\emph{or} s-hierarchical) structure} \cite{EH5}, \cite{EH4}, that depends upon
its structure as a \emph{lexicographically ordered full binary tree} and arises
from the fact that the sums and products of any two members of the tree are the simplest possible
elements of the tree consistent with  ${\bf No}$'s structure as an ordered group and an ordered
field, respectively, it being understood that  $x$ is \emph{simpler than}  $y$ just in case  $x$ is a
predecessor of  $y$ in the tree.

Among the striking s--hierarchical features of  ${\bf No}$ that emerged from \cite{EH5} is that much as the surreal numbers emerge from the empty set of surreal
numbers by means of a transfinite recursion that provides an unfolding of the entire spectrum of
numbers great and small (modulo the aforementioned provisos), the recursive process of defining
${\bf No}$'s arithmetic in turn provides an unfolding of the entire spectrum of ordered abelian
groups (ordered fields) in such a way that an isomorphic copy of every such
system either emerges as an initial subtree of ${\bf No}$ or is contained in a theoretically
distinguished instance of such a system that does. In particular, it was shown that every divisible
ordered abelian group (real-closed ordered field) is isomorphic to an \emph{initial subgroup
(initial subfield)} of  ${\bf No}$.

The divisible ordered abelian groups and real-closed ordered fields, however, do not exhaust the ordered abelian groups and ordered fields that are isomorphic to initial subgroups and subfields of ${\bf No}$.  For example, every 2-divisible Archimedean ordered abelian group has an initial isomorphic copy in ${\bf No}$, as does and every Archimedean ordered field \cite[Theorem 8]{EH5}, and these groups and fields of course are not in general divisible or real-closed. In the case of ordered fields, more generally, in \cite[Theorem 18]{EH5} it was shown that:

\vspace{5pt}

\noindent
\emph{An ordered field is isomorphic to an initial subfield of {\bf No} if and only if it is isomorphic to a truncation closed, cross sectional subfield of a power series field  $\bR((t^\Gamma))_{\bf On}$ where $\Gamma$ is isomorphic to an initial subgroup of {\bf No}.}

\vspace{5pt}

The present paper is a sequel to \cite{EH5}. Following some preliminary material, in \S 5 and \S 6 we generalize for ordered abelian groups and ordered domains the just-said result for ordered fields, and in \S 7 we employ these generalizations to characterize the convex subgroups and convex subdomains of initial subfields of ${\bf No}$ that are themselves initial. We further show that an initial subdomain of ${\bf No}$ is discrete if and only if it is an initial subdomain of ${\bf No}$'s canonical integer part ${\bf Oz}$ of \emph{omnific integers}. And, in \S 8, we extend results of \cite{EH5} by showing that the theories of divisible ordered abelian groups and real-closed ordered fields are the \emph{sole} theories of ordered abelian groups and ordered fields \emph{all} of whose models are isomorphic to initial subgroups and initial subfields of ${\bf No}$. Finally, in \S 9 we state a pair of open questions regarding s-hierarchical ordered algebraic systems that supplement another open question raised in \S 8.

	Throughout the paper, the underlying set theory is assumed to be NBG and as such by
\emph{class} we mean set or proper class, the latter of which, in virtue of the axioms of global
choice and foundation, always has the ``cardinality'' of the universe of sets. For additional
information on formalizing the theory of surreal numbers in NBG, we refer the reader to
\cite{EH2}.

\section{Preliminaries I: Lexicographically ordered binary trees and Surreal Numbers}

A \emph{tree}  $\left\langle {A, < _s } \right\rangle $ is a partially ordered class such that for
each  $x \in A$, the class  $\left\{ {y \in A:y < _s x} \right\}$ of \emph{predecessors} of $x$,
written  `$pr_A \left( x \right)$', is a set well ordered by  $ < _s $. A maximal subclass of $A$ well
ordered by  $ < _s $ is called a \emph{branch} of the tree. Two elements  $x$ and $y$ of $A$ are
said to be \emph{incomparable} if  $x \ne y$,  $x\not  < _s y$ and  $y\not  < _s x$. An
\emph{initial subtree} of  $\left\langle {A, < _s } \right\rangle $ is a subclass  $A'$ of  $A$ with
the induced order such that for each $x \in A'$,  $pr_{A'} \left( x \right) = pr_A \left( x \right)$.
The \emph{tree-rank} of  $x \in A$, written  `$\rho _A (x)$', is the ordinal corresponding to the
well-ordered set $\left\langle {pr_A \left( x \right), <_s}\right\rangle $; the  $\alpha $th
\emph{level} of  $A$ is $\left\langle {x \in A:\rho _A (x) = \alpha } \right\rangle$ ; and a root of
$A$ is a member of the zeroth level. If  $x,y \in A$, then  $y$ is said to be an \emph{immediate
successor} of  $x$ if  $x < _s y$ and $\rho _A (y) = \rho _A (x) + 1$; and if  $(x_\alpha
)_{\alpha  < \beta }$ is a chain in $A$ (i.e., a subclass of  $A$ totally ordered by  $ < _s $), then
$y$ is said to be an \emph{immediate successor of the chain} if  $x_\alpha   < _s y$ for all
$\alpha  < \beta$ and  $\rho _A (y)$ is the least ordinal greater than the tree-ranks of the members
of the chain. The \emph{length} of a chain  $(x_\alpha  )_{\alpha  < \beta }$ in  $A$ is the
ordinal  $\beta $.

	A tree $\left\langle {A, < _s } \right\rangle $ is said to be \emph{binary} if each member
of  $A$ has at most two immediate successors and every chain in $A$ of limit length has at most
one immediate successor. If every member of  $A$ has two immediate successors and every
chain in  $A$ of limit length (including the empty chain) has an immediate successor, then the
binary tree is said to be \emph{full}. Since a full binary tree has a level for each ordinal, the
universe of a full binary tree is a proper class.

Following \cite[Definition 1]{EH5}, a binary tree  $\left\langle {A, < _s } \right\rangle $ together with a total ordering  $ < $ defined
on  $A$ will be said to be \emph{lexicographically ordered} if for all $x,y \in A$,  $x$ is
incomparable with  $y$ if and only if  $x$ and  $y$ have a common predecessor lying between
them (i.e. there is a  $z \in A$ such that $z<_s x$, $z<_s y$ and either $x<z<y$ or $y<z<x$). The appellation ``lexicographically ordered" is motivated by the fact that: $\left\langle {A, < , < _s } \right\rangle $ is a lexicographically ordered binary tree if and only if $\left\langle {A, < , < _s } \right\rangle $ is isomorphic to an initial ordered subtree of the \emph{lexicographically ordered canonical full binary tree} $\langle B, < _{lex\left( B \right)} , < _B \rangle $, where $B$ is the class
of all sequences of  $ - $s and  $ + $s indexed over some ordinal, $x < _B y$ signifies that
$x$ is a proper initial subsequence of  $y$, and $(x_\alpha  )_{\alpha  < \mu }  < _{lex\left(
B \right)} (y_\alpha  )_{\alpha  < \sigma }$ if and only if $x_\beta   = y_\beta $ for all $\beta  < $
some  $\delta $, but  $x_\delta   < y_\delta$, it being understood that  $ -  < $ \emph{undefined}
$ <  + $  \cite[Theorem 1]{EH5}. 

\begin{notational conventions*}
{\rm Let  $\left\langle {A, < , < _s } \right\rangle $ be a
lexicographically ordered binary tree. If  $\left( {L,R} \right)$ is a pair of subclasses of  $A$ for
which every member of  $L$ precedes every member of  $R$, then we will write `$L < R$'.
Also, if  $x$ and  $y$ are members of  $A$, then `$x < _s y$' will be read ``$x$ \emph{is simpler
than} $y$''; and if there is an  $x \in I = \left\{ {y \in A:L < \left\{ y \right\} < R} \right\}$ such
that  $x < _s y$ for all  $y \in I - \left\{ x \right\}$, then we will denote this \emph{simplest
member of}  $A$ \emph{lying between the members of}  $L$ \emph{and the members of} $R$
by `$\left\{ {L\, | \, R} \right\}$'. Finally, by `$L_{s\left( x \right)} $' we mean  $\left\{ {a \in A:a <
_s x\;{\rm{and}}\;a < x} \right\}$ and by `$R_{s\left( x \right)} $' we mean  $\left\{ {a \in A:a <
_s x\;{\rm{and}}\;x < a} \right\}$.}

\end{notational conventions*}

	The following three propositions collects together a number of properties of, or results about,
lexicographically ordered binary trees that will be appealed to in subsequent portions of the
paper.

\begin{proposition}\cite[Theorem 2]{EH5}\label{P:Bc}
Let  $\left\langle {A, < , < _s } \right\rangle $ be a lexicographically ordered binary tree. (i) For
all  $x \in A$,  $x = \left\{ {L_{s\left( x \right)} \, | \, R_{s\left( x \right)} } \right\}$; (ii) for all  $x,y
\in A$,  $x < _s y$ if and only if  $L_{s\left( x \right)}  < \left\{ y \right\} < R_{s\left( x \right)} $
and  $y \ne x$; (iii) for all  $x \in A$ and all  $L,R \subseteq A$,  $x = \left\{ {L \, | \, R} \right\}$ if
and only if  $L$ is cofinal with  $L_{s\left( x \right)} $ and  $R$ is coinitial with  $R_{s\left( x
\right)} $ if and only if $L < \{x\} < R$ and $\{ y \in A: L < \{y\} < R\} \subseteq \{ y \in A: L_{s(x)} < \{y\} < R_{s(x)}\}$.

\end{proposition}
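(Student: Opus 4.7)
The plan is to pass to the sign-sequence representation just sketched: by \cite[Theorem 1]{EH5}, $\langle A,<,<_s\rangle$ embeds as an initial subtree of the canonical lex-ordered full binary tree $B$, on which $L_{s(x)}$ and $R_{s(x)}$ admit concrete descriptions in terms of proper initial subsequences of $x$.

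For (i), first note that $L_{s(x)}<\{x\}<R_{s(x)}$ holds by definition. To see $x$ is simpler than any other element lying strictly between them, suppose $y\in A$ with $y\ne x$ satisfies $L_{s(x)}<\{y\}<R_{s(x)}$. If $y<_sx$, then $y\in pr_A(x)$, and depending on whether $y<x$ or $y>x$ we get $y\in L_{s(x)}$ or $y\in R_{s(x)}$, contradicting the strict inequalities. If $x$ and $y$ are $<_s$-incomparable, the defining property of a lex-ordered binary tree supplies a common $<_s$-predecessor $z$ lying strictly between them in $<$; since $z$ is then a $<_s$-predecessor of $x$, we have $z\in L_{s(x)}\cup R_{s(x)}$, again contradicting the strict inequalities. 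Hence $x<_sy$, proving $x=\{L_{s(x)}\,|\,R_{s(x)}\}$. The $\Leftarrow$ direction of (ii) is exactly what this case analysis established; for the $\Rightarrow$ direction, if $x<_sy$ and $a\in L_{s(x)}$, then $a$ is a proper initial subsequence of $x$, and $a<x$ forces the sign of $x$ at position $|a|$ to be $+$; since $y$ extends $x$ this sign is inherited, so $a<y$. The argument for $R_{s(x)}$ is symmetric.

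For (iii), I would prove the chain (a)$\Rightarrow$(c)$\Rightarrow$(b)$\Rightarrow$(a), where (a), (b), (c) label the three conditions in the statement. For (a)$\Rightarrow$(c): $L<\{x\}<R$ is part of the meaning of $x=\{L\,|\,R\}$, and for any $y$ with $L<\{y\}<R$, minimality gives $y=x$ or $x<_sy$, in which cases (i) and (ii) respectively yield $L_{s(x)}<\{y\}<R_{s(x)}$. For (c)$\Rightarrow$(b): given $a\in L_{s(x)}$, if no $a'\in L$ satisfies $a\le a'$, then $L<\{a\}$, and since $a<x$ and $\{x\}<R$ one also has $\{a\}<R$; the subset condition of (c) then yields $L_{s(x)}<\{a\}$, contradicting $a\in L_{s(x)}$. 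Symmetrically $R$ is coinitial with $R_{s(x)}$. For (b)$\Rightarrow$(a): reading ``cofinal/coinitial with'' as supplying the one-sided cofinality plus the bounds $L<\{x\}<R$, any $y\ne x$ with $L<\{y\}<R$ then satisfies $L_{s(x)}<\{y\}<R_{s(x)}$ (for $a\in L_{s(x)}$ pick $a'\in L$ with $a\le a'$, whence $a\le a'<y$; symmetrically on the right), and by (ii), $x<_sy$, giving $x=\{L\,|\,R\}$.

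The main obstacle is more definitional than mathematical: fixing the precise reading of ``$L$ is cofinal with $L_{s(x)}$'' so that (b) cleanly coincides with (a) and (c). The natural interpretation is one-sided cofinality (every element of $L_{s(x)}$ is $\le$ some element of $L$) together with the implicit bound $L<\{x\}$, and symmetrically for $R$; with this in place, the substantive work all lies in (i), where the lex-ordered binary tree axiom is used essentially to dispatch the $<_s$-incomparable case.
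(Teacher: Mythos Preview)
The paper does not prove this proposition; it is quoted from \cite[Theorem 2]{EH5} as a preliminary result and used without argument, so there is no in-paper proof to compare your attempt against.

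Your argument is correct. The case analysis in (i)---ruling out $y<_s x$ because that would place $y$ in $L_{s(x)}\cup R_{s(x)}$, and ruling out $<_s$-incomparability via the defining property of a lexicographically ordered binary tree---is exactly the standard move, and it simultaneously yields the $\Leftarrow$ direction of (ii). Your appeal to the sign-expansion model (legitimized by \cite[Theorem 1]{EH5}) for the $\Rightarrow$ direction of (ii) is clean and valid; since the embedding is onto an initial subtree, simplest elements between subsets of $A$ are computed the same way in $A$ as in the canonical full tree, so nothing is lost in the transfer.

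Your cycle (a)$\Rightarrow$(c)$\Rightarrow$(b)$\Rightarrow$(a) for (iii) goes through as written. You are also right to flag the reading of ``$L$ is cofinal with $L_{s(x)}$'': it must be taken as the one-sided condition (every element of $L_{s(x)}$ is $\le$ some element of $L$) together with the bound $L<\{x\}$, and dually for $R$. Mutual cofinality would be strictly too strong---for instance $2=\{3/2\,|\,3\}$ in $\mathbf{No}$, yet $\{3/2\}$ is not bounded above by any element of $L_{s(2)}=\{0,1\}$---so your interpretation is the intended one in \cite{EH5}.
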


Let $\left\langle {{\bf{No}}, < , < _s } \right\rangle $ be the \emph{lexicographically ordered binary tree of surreal numbers} constructed in any of the manners found in the literature (\cite{EH4}, \cite{EH5}, \cite{EH5.1}, \cite{EH8}, \cite{vdDE}, \cite{SCST}), including simply letting $\left\langle {{\bf{No}}, < , < _s } \right\rangle =\langle B, < _{lex\left( B \right)} , < _B \rangle $.\footnote{Conway's cut construction \cite{CO} and the related construction based on Cuesta Dutari cuts introduced in \cite{EH1} (and adopted in \cite{AE1} and \cite{AE2}), do not include {\bf{No}}'s lexicographically ordered binary tree structure. However, as was noted in \cite[page 257]{EH4}, they admit relational extensions to the ordered tree structure vis-\'a-vis the definition: for all  $x = (L,R) , y \in  {\bf{No}}$, $x<_sy$ if and only if $L < \{y\} < R$ and $y \neq x$. 

The identification $\left\langle {{\bf{No}}, < , < _s } \right\rangle =\langle B, < _{lex\left( B \right)} , < _B \rangle $, which is employed in \cite{EH5}, \cite{vdDE},\cite{KM} and \cite{BM}, is simply a relational extension of the familiar (non tree-theoretic) construction of {\bf{No}} based on \emph{sign-expansions}---the members of $B$---introduced by Conway \cite[page 65]{CO}, and made popular by Gonshor \cite{GO}.} Central to the development of the s-hierarchical theory of surreal numbers is the following result where a lexicographically ordered binary tree  $\left\langle {A, < , < _s } \right\rangle $ is said
to be \emph{complete} \cite[Definition 6]{EH5}, if whenever  $L$ and  $R$ are subsets
of  $A$ for which  $L < R$, there is an  $x \in A$ such that  $x = \left\{ {L\, | \, R} \right\}$. 

\begin{proposition}\cite[Theorem 4]{EH5}\cite{EH5.2}\label{P:Bd}
A lexicographically ordered binary tree is complete if and only if it is full if and only if it is isomorphic to  $\left\langle {{\bf{No}}, < , < _s } \right\rangle $.

\end{proposition}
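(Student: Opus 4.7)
The plan is to prove the three-way equivalence in two halves: fullness if and only if isomorphism to ${\bf No}$, and completeness if and only if fullness. The identification $\langle {\bf No}, <, <_s \rangle = \langle B, <_{lex(B)}, <_B\rangle$ via sign sequences provides the backbone.

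For the first half, I would invoke Theorem 1 of \cite{EH5} to realize $A$ as an initial subtree of $B$; since $B$ is patently full, it suffices to prove that if $A$ is full, then $A = B$. I would argue this by transfinite induction on the level $\alpha$: at level $0$, the empty sign sequence lies in $A$ because the empty chain in $A$ has an immediate successor by fullness; at a successor level $\alpha + 1$, each sign sequence arises as an immediate successor of one already in $A$ (by induction), which fullness places in $A$; at a limit level $\alpha$, each sign sequence of length $\alpha$ is realized in $A$ as the immediate successor---supplied by fullness---of the chain of its proper initial segments.

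For the second half, I would first observe that $B$ is complete: given subsets $L < R$ of $B$, the simplest separating sign sequence is built by transfinite recursion, extending one sign at a time while preserving the possibility of separation and taking unions at limit stages. Combined with the first half, this yields fullness implies completeness. For the converse, I would verify each clause of fullness from completeness. A root is $\{\emptyset \mid \emptyset\}$, whose predecessor set is empty by Proposition \ref{P:Bc}(i). For $x \in A$, I let $y = \{\{x\} \mid R_{s(x)}\}$ (which exists by completeness, since $R_{s(x)}$ is a set) and claim $y$ is the immediate right successor of $x$: by Proposition \ref{P:Bc}(iii), $\{x\}$ is bilaterally cofinal with $L_{s(y)}$ and $R_{s(x)}$ bilaterally coinitial with $R_{s(y)}$, which I unpack to conclude that $L_{s(y)} = L_{s(x)} \cup \{x\}$ and $R_{s(y)} = R_{s(x)}$, whence $\rho_A(y) = \rho_A(x) + 1$. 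The immediate left successor is treated symmetrically. For a chain $(x_\beta)_{\beta < \alpha}$ of limit length, I partition it into $L^* = \{x_\beta : x_\beta < x_{\beta+1}\}$ and $R^* = \{x_\beta : x_\beta > x_{\beta+1}\}$, verify $L^* < R^*$ using the lex-binary-tree structure, and set $y = \{L^* \mid R^*\}$; a parallel cofinality/coinitiality analysis identifies the predecessor chain of $y$ with the $x_\beta$'s, making $y$ an immediate successor of the chain.

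The hard part will be the cofinality/coinitiality analysis in the immediate-successor constructions: Proposition \ref{P:Bc}(iii) supplies only that $\{x\}$ (respectively $L^*$) is mutually cofinal with $L_{s(y)}$, but pinning down $L_{s(y)}$ exactly---and thereby the tree-rank of $y$---requires ruling out any predecessor of $y$ strictly between $x$ (or the chain) and $y$ in the simplicity order. Such an intermediate predecessor, viewed as a descendant of $x$ (respectively of the chain) in the sign-sequence realization, would introduce elements into $R_{s(y)}$ that no element of $R_{s(x)}$ (respectively $R^*$) bounds from below, violating the required coinitiality.
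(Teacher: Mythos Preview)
The paper does not supply a proof of this proposition: it is stated with citations to \cite[Theorem 4]{EH5} and its corrigendum \cite{EH5.2}, and no argument is given in the present paper. There is therefore nothing here to compare your proposal against. Your outline is a reasonable reconstruction of how such a result can be proved---embedding into the canonical sign-sequence tree via \cite[Theorem 1]{EH5}, then a level-by-level induction for fullness, and building immediate successors from cuts for completeness $\Rightarrow$ fullness---and the technical point you flag (pinning down $pr_A(y)$ exactly from the cofinality/coinitiality conclusion of Proposition~\ref{P:Bc}(iii)) is indeed where the work lies. But since the authors defer entirely to \cite{EH5}, any comparison of approaches would have to be made against that earlier paper rather than this one.
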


An immediate consequence of Proposition \ref{P:Bd} is 

\begin{proposition}\label{P:eta}

Let $\left\langle {A, < , < _s } \right\rangle $ be a lexicographically ordered binary tree. $\left\langle {A, < _s } \right\rangle $ is full if and only if $\left\langle {A, < } \right\rangle $ is an $\eta_{\bf {On}}$-\emph{ordering} (i.e. whenever  $L$ and  $R$ are subsets
of  $A$ for which  $L < R$, there is an  $x \in A$ such that $L <{x}< R$).

\end{proposition}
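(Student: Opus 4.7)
The plan is to reduce the proposition to Proposition \ref{P:Bd} by proving that, in any lexicographically ordered binary tree, the $\eta_{\bf On}$ property is equivalent to completeness. The forward direction is essentially immediate: if $\langle A,<_s\rangle$ is full, then by Proposition \ref{P:Bd} it is complete, and completeness trivially implies the $\eta_{\bf On}$ condition, since for any subsets $L<R$ the element $\{L\,|\,R\}$ is a particular witness lying strictly between $L$ and $R$.

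For the converse, I assume $\langle A,<\rangle$ is $\eta_{\bf On}$ and aim to show $\langle A,<,<_s\rangle$ is complete; fullness then follows again from Proposition \ref{P:Bd}. Let $L,R\subseteq A$ with $L<R$ and set $I=\{y\in A:L<\{y\}<R\}$. By the $\eta_{\bf On}$ hypothesis, $I$ is nonempty. Since tree-ranks are ordinals, I may let $\alpha=\min\{\rho_A(y):y\in I\}$ and put $X=\{y\in I:\rho_A(y)=\alpha\}$. I will show first that $|X|=1$, and then that the unique element of $X$ is simpler than every other member of $I$.

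For the uniqueness step, suppose $x_1,x_2\in X$ with $x_1\neq x_2$; since they share the tree-rank $\alpha$, neither is a predecessor of the other, so they are incomparable. The lexicographic condition then supplies a common predecessor $z$ lying (under $<$) strictly between them. Assuming $x_1<x_2$, this gives $L<x_1<z<x_2<R$, so $z\in I$, yet $\rho_A(z)<\alpha$, contradicting the minimality of $\alpha$. Hence $X=\{x\}$ for a unique $x$. For any other $y\in I$, we have $\rho_A(x)<\rho_A(y)$; if $x$ and $y$ were incomparable, the same lexicographic argument would produce a common predecessor of them in $I$ of tree-rank $<\alpha$, again contradicting the choice of $\alpha$. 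So $x$ and $y$ are comparable, and the rank inequality forces $x<_s y$. Thus $x=\{L\,|\,R\}$, establishing that $\langle A,<,<_s\rangle$ is complete, and therefore full by Proposition \ref{P:Bd}.

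The only delicate point is the verification that the rank-minimal element of $I$ is actually simpler than every other element of $I$, rather than merely of smaller tree-rank; but this is handled uniformly by exploiting the lexicographic condition to convert any putative incomparability with $x$ into a still-simpler element of $I$. Beyond that, the argument is a routine application of the definitions together with Proposition \ref{P:Bd}.
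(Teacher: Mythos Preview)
Your proof is correct and follows the route the paper intends: the paper simply records the proposition as ``an immediate consequence of Proposition~\ref{P:Bd}'' without further argument, and your reduction to completeness via Proposition~\ref{P:Bd} together with the rank-minimization argument showing $\eta_{\bf On}\Rightarrow$ complete is exactly the standard way to cash that out. In short, you have supplied the details the paper leaves implicit.
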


\section{Preliminaries II: Conway names}
\label{sec:theorems}

Let $\mathbb{D}$ be the set of all surreal numbers having finite tree-rank, and 
$$\mathbb{R}=\mathbb{D}\cup  \left \{ \left \{ L\, | \, R \right \}:\left ( L,R \right )\textrm{ is a
Dedekind gap in }\mathbb{D} \right \}.$$

	The following result regarding the structure of
$\mathbb{R}$ is essentially due to Conway \cite[pages 12, 23-25]{CO}.

\begin{proposition}\label{P:Cb}

$\mathbb{R}$  (with  $ + , - , \cdot $ and  $ < $ defined \`{a} la  ${\bf{No}}$) is
isomorphic to the ordered field of real numbers defined in any of the more familiar ways,
$\mathbb{D}$ being  ${\bf{No}}$'s ring of dyadic rationals (i.e., rationals of the form  $m/2^n
$ where  $m$ and  $n$ are integers);  $n = \left\{ {0,\dots,n - 1 \, | \, \varnothing } \right\}$ for each
positive integer  $n$,  $ - n = \left\{ {\varnothing \, | - \left( {n - 1} \right),\dots,0} \right\}$ for each
positive integer  $n$,  $0 = \left\{ {\varnothing \, | \, \varnothing } \right\}$, and the remainder of the
dyadics are the arithmetic means of their left and right predecessors of greatest tree-rank; e.g.,
$1/2 = \left\{ {0 \, | \, 1} \right\}$. The systems of natural numbers and integers so defined are henceforth denoted $\mathbb{N}$ and $\mathbb{Z}$, respectively.
\end{proposition}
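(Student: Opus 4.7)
The strategy proceeds in three stages: first, identify each element of finite tree-rank in $\mathbf{No}$ as a specific dyadic with the claimed cut representation; second, verify that Conway's recursive sum and product restricted to $\bD$ yield ordinary dyadic arithmetic; third, conclude that $\bR$, which fills the Dedekind gaps in $\bD$, is isomorphic to the classical real ordered field.

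For the first stage, I would induct on $n \geq 0$ to identify the level-$n$ elements of $\mathbf{No}$, applying Proposition \ref{P:Bc}(i) to each. The root is $0 = \{\varnothing \mid \varnothing\}$. Inductively, the immediate tree-successors of a rank-$(n-1)$ element $x$ are the $<_s$-simplest surreals in the open intervals bounded by $x$ and by its rank-$<n$ order-neighbors (unbounded at the extremes). Applied to $n-1 = \{0, \dots, n-2 \mid \varnothing\}$, whose inductive predecessor chain is $\{0, \dots, n-2\}$ with no rank-$<n$ right-bound, the right-successor acquires predecessor chain $\{0, 1, \dots, n-1\}$ and hence, by Proposition \ref{P:Bc}(i), cut representation $\{0, \dots, n-1 \mid \varnothing\}$; I label this the integer $n$. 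A mirror argument handles negative integers. For remaining new elements at level $n$, the tree-successor of $x$ sandwiched between $x$ and its rank-$<n$ right-neighbor $y$ has cut representation $\{L_{s(x)} \cup \{x\} \mid \{y\} \cup R_{s(y)}\}$, which by Proposition \ref{P:Bc}(iii) reduces to $\{x \mid y\}$.

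For the second stage, I would invoke Conway's recursive formulas
\[
x + y = \{x_L + y,\, x + y_L \mid x_R + y,\, x + y_R\}, \qquad xy = \{x_L y + x y_L - x_L y_L,\, \ldots \mid \ldots\}
\]
and show by induction on the sum of tree-ranks that these match ordinary dyadic addition and multiplication on $\bD$. For integers, a straightforward induction yields $n + 1 = \{0, 1, \dots, n \mid \varnothing\}$, confirming the sum $n + 1$ coincides with the integer $n+1$ identified in the first stage. For a non-integer dyadic $d = \{x \mid y\}$ at rank $n$, where $x$ and $y$ are its greatest-rank left and right predecessors, the midpoint identity $d = (x+y)/2$ is verified by computing $x+y$ via Conway's formula (producing an integer) and observing that the unique surreal $z$ with $2z = x+y$ must satisfy $z = \{x \mid y\}$ by the simplicity theorem.

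For the third stage, having identified $\bD$ with the ring of dyadic rationals, the class $\bR = \bD \cup \{\text{Dedekind-gap fillings}\}$ becomes Dedekind-complete by construction. Since the dyadics are dense in the classical real line, $\bR$ is order-isomorphic to the ordered field of real numbers; the continuity of Conway's $+, \cdot$ with respect to the order-topology extension forces this order-isomorphism to preserve the ring structure. The principal obstacle is the arithmetic verification in the second stage, especially the unpacking of Conway's recursive product; these calculations, however, are standard from Conway \cite{CO} and Gonshor \cite{GO}, and the simplicity theorem (Proposition \ref{P:Bc}) ensures that each resulting cut evaluates to the intended dyadic.
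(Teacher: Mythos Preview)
The paper does not supply a proof of this proposition; it simply attributes the result to Conway \cite[pages 12, 23--25]{CO} and states it. Your three-stage outline is essentially the argument Conway gives there (and Gonshor reprises), so in substance your approach and the paper's citation point to the same proof.

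One small correction: in your second stage you write that for a non-integer dyadic $d=\{x\mid y\}$ the Conway sum $x+y$ ``produc[es] an integer''. This is false beyond level~2 (e.g.\ $3/4=\{1/2\mid 1\}$ gives $x+y=3/2$); what is true, and what you need, is only that $x+y$ is a dyadic of strictly smaller tree-rank to which the inductive hypothesis applies, after which the midpoint identity $d=(x+y)/2$ follows as you indicate.
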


${\bf No}$'s canonical class $\bf On$ of ordinals consists of the members of the ``rightmost" branch of $\left\langle {{\bf{No}}, < , < _s } \right\rangle $, i.e. the  unique branch of $\left\langle {{\bf{No}}, < , < _s } \right\rangle $ whose members satisfy the condition: $x<y$ if and only if $x<_sy$. In those formulations where surreal numbers are pairs $(L,R)$ of sets of previously defined surreal numbers (\cite{CO}, \cite{EH8}, \cite{AE1}), the ordinals are the surreal numbers of the form $(L,\varnothing)$; and in the formulation \cite{GO} where surreal numbers are sign-expansions (see \S 2), the ordinals are the sequences (including the empty sequence) consisting solely of +s.

	A striking s--hierarchical feature of  ${\bf No}$ is that every surreal number can
be assigned a canonical ``proper name'' (or normal form) that is a reflection of its characteristic
s--hierarchical properties. These \emph{Conway names}, as we call them, are expressed as
formal sums of the form  $\sum\nolimits_{\alpha  < \beta } {\omega ^{y_\alpha  } .r_\alpha} $
where  $\beta $ is an ordinal,  $\left( {y_\alpha} \right)_{\alpha  < \beta } $ is a strictly decreasing
sequence of surreals, and  $\left( {r_\alpha  } \right)_{\alpha  < \beta } $ is a sequence of nonzero
real numbers, the Conway name of an ordinal being just its Cantor normal form \cite[pages 31-33]{CO},\cite[\S3.1 and \S5]{EH5}.

The surreal numbers having Conway names of the form $\omega^y$ are called \emph{leaders} since they denote the simplest positive members of the various Archimedean classes of ${\bf No}$.  More formally, they may be inductively defined by formula

\begin{equation}
\omega^{y} = \left\{ 0, n\omega^{y^L}\, |\, \frac{1}{2^n}\omega^{y^R} \right\},
\end{equation}

\noindent
where $n$ ranges over the positive integers, and $a^L$ and $a^R$ range over the elements of $L_{s(y)}$ and $R_{s(y)}$, respectively.

There are a number of significant
relations between surreal numbers that are reflected in terms of relations
between their respective Conway names. The following collection of such results, which are known from the literature, will be appealed to in the subsequent discussion. 

\begin{proposition}\cite[Theorems 11 and 15]{EH5}
\label{thrm15ehr01}
(i) For all $x,y \in {\bf No}$, $\omega^{x}<_s \omega^{y}$ if and only if $x<_{s} y$;
(ii) $\sum_{\alpha < \mu} \omega^{y_\alpha} \cdot r_\alpha <_s \sum_{\alpha < \beta} \omega^{y_\alpha} \cdot r_\alpha$ whenever $\mu <_s \beta$;\\
\begin{equation*}
\begin{split}
(iii) \sum_{\alpha < \beta} \omega^{y_\alpha} \cdot r_\alpha= \left\{\sum_{\alpha < \mu} \omega^{y_\alpha} \cdot r_\alpha+\omega^{y_\mu}\cdot\right. & \left.\left(r_\mu-\frac{1}{2^n}\right)\ \right  |\\
& \left.\sum_{\alpha < \mu} \omega^{y_\alpha} \cdot r_\alpha+\omega^{y_\mu}\cdot\left(r_\mu+\frac{1}{2^n}\right)\right\},
\end{split}
\end{equation*}
if $\beta$ is a limit ordinal (where $n$ and $\mu$ range over all positive integers and all ordinals less than $\beta$, respectively).
\end{proposition}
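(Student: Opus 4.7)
The plan is to prove the three parts by transfinite induction, exploiting the inductive defining formula (1) for $\omega^y$, a parallel recursive cut representation for partial sums of Conway normal forms, and the simplicity characterizations of Proposition \ref{P:Bc}(ii)--(iii).

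For part (i), I would induct on the tree-rank of $y$. By formula (1), the pair with left class $\{0\} \cup \{n\omega^{y^L} : n \in \bN,\ y^L \in L_{s(y)}\}$ and right class $\{\tfrac{1}{2^n}\omega^{y^R} : n \in \bN,\ y^R \in R_{s(y)}\}$ is a canonical representation of $\omega^y$, so by Proposition \ref{P:Bc}(iii) these classes are respectively cofinal in $L_{s(\omega^y)}$ and coinitial in $R_{s(\omega^y)}$. To establish the forward direction $x <_s y \Rightarrow \omega^x <_s \omega^y$, it suffices by Proposition \ref{P:Bc}(ii) to show $L_{s(\omega^x)} < \{\omega^y\} < R_{s(\omega^x)}$ and $\omega^y \neq \omega^x$. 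Assuming without loss of generality $x < y$, the inductive hypothesis together with the archimedean interaction between the $n$ and $\tfrac{1}{2^n}$ coefficients yields that every $n\omega^x$ is strictly dominated by every $\tfrac{1}{2^n}\omega^{y^R}$ (since $\omega^{y^R}$ is infinitely larger than $\omega^x$ whenever $x<_s y^R$), and that $\omega^y$ is bounded below by every $n\omega^{x^L}$; an analogous analysis on the right side of $\omega^x$ completes the verification. The converse follows because the map $x \mapsto \omega^x$ is strictly $<$-order-preserving and injective, so $\omega^x <_s \omega^y$ combined with $\omega^x \neq \omega^y$ forces $x <_s y$.

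For parts (ii) and (iii), I would exploit the recursive cut representation of $\sigma := \sum_{\alpha < \beta} \omega^{y_\alpha}\cdot r_\alpha$. At a successor stage $\beta = \mu+1$, applying the inductive definitions of addition and of the leader-coefficient product to $\sum_{\alpha<\mu}\omega^{y_\alpha}\cdot r_\alpha + \omega^{y_\mu}\cdot r_\mu$ produces canonical left and right options of the form $\sum_{\alpha<\mu}\omega^{y_\alpha}\cdot r_\alpha + \omega^{y_\mu}\cdot(r_\mu \pm \tfrac{1}{2^n})$, which straddle $\sum_{\alpha<\mu}\omega^{y_\alpha}\cdot r_\alpha$; Proposition \ref{P:Bc}(ii) then yields $\sum_{\alpha<\mu}\omega^{y_\alpha}\cdot r_\alpha <_s \sigma$. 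For (iii), at a limit stage, one verifies that the two sets displayed in the formula are cofinal in $L_{s(\sigma)}$ and coinitial in $R_{s(\sigma)}$ respectively, then concludes by Proposition \ref{P:Bc}(iii). Statement (ii) then follows in full generality by a transfinite induction on $\beta$: the successor case is immediate from the displayed inclusion, the limit case from (iii), and arbitrary $\mu <_s \beta$ from transitivity of $<_s$, recalling that on ordinals $<_s$ coincides with $<$.

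The main obstacle lies in the cofinality/coinitiality verification supporting (iii). One must rule out that some surreal of tree-rank strictly less than that of $\sigma$ squeezes into the interval carved out by the displayed left and right sets---that no ``simpler witness'' other than a genuine truncation-plus-perturbation of the normal form arises. This requires careful inductive control over which options of lesser tree-rank can appear at each stage of building up the partial sums, and ultimately rests on part (i) (which ensures the leaders $\omega^{y_\alpha}$ associated with distinct exponents are correctly nested in simplicity) together with the uniqueness of the Conway normal form. Once these structural facts are in place, parts (ii) and (iii) drop out without further computation.
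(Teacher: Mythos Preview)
The paper does not supply a proof of this proposition: it is stated as a citation of Theorems 11 and 15 from \cite{EH5}, and the text moves directly on to Lemma \ref{squeeze}. There is therefore no argument in the present paper against which to compare your attempt.

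That said, two remarks on your sketch. Your justification of the converse in (i) is not correct as written: that $x \mapsto \omega^x$ is strictly $<$-increasing and injective says nothing by itself about the simplicity relation $<_s$. What you actually need is that the forward implication, once established, makes this map a tree-embedding for $<_s$; then if $x$ and $y$ were $<_s$-incomparable with common predecessor $z$ lying strictly between them in $<$, the element $\omega^z$ would be a common $<_s$-predecessor of $\omega^x$ and $\omega^y$ lying strictly between them (by $<$-monotonicity), forcing $\omega^x$ and $\omega^y$ to be $<_s$-incomparable by the defining property of a lexicographically ordered binary tree---contradicting $\omega^x <_s \omega^y$. For (ii) and (iii) you have correctly identified both the overall structure of the argument and the principal obstacle (the cofinality/coinitiality verification at limit stages), but you have not actually carried it out; what you have written is a plan rather than a proof. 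The full details require the inductive bookkeeping carried out in \cite{EH5} and, for related facts, in Gonshor \cite{GO}.
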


We shall also appeal to the following compilation of results regarding Conway names which, while new to the literature, essentially consists of corollaries of results from the first author's analysis of the surreal number tree \cite{EH7} or improvements (based on that analysis) of a result of Gonshor \cite[Lemma 5.8(a)]{GO}.

\begin{lemma}
\label{squeeze}
Suppose $a,b,r \in \bR$ and $x, y \in {\bf No}$. Then: (i) $\omega^y \cdot a <_s \omega^y \cdot b$ whenever $a <_s b$.  (ii) If either $r \in \mathbb{R}-\mathbb{D}$, or $r \in \mathbb{D}-\mathbb{Z}$ and $L_{s(y)}= \varnothing$, then $$ \omega^{y}\cdot r= \{ \omega^{y}\cdot r^L\, |\, \omega^{y}\cdot r^R\};$$ moreover, in virtue of (i), $\omega^y \cdot r^L<_s \omega^y \cdot r$ and $\omega^y \cdot r^R <_s \omega^y \cdot r$ for all $r^L \in L_{s(r)}$ and $r^R \in R_{s(r)}$. (iii) If $r \in \mathbb{D}-\mathbb{Z}$ and $L_{s(y)} \neq \varnothing$, then $$ \omega^{y}\cdot r= \{ \omega^{y}\cdot r^L+\omega^{y^L}\cdot n\, |\, \omega^{y}\cdot r^R-\omega^{y^L}\cdot n\},$$ where $n$ ranges over $\mathbb{N}$; moreover, $ \omega^{y}\cdot r^L+\omega^{y^L}\cdot n\ <_s \omega^{y}\cdot r$ and $ \omega^{y}\cdot r^R-\omega^{y^L}\cdot n\ <_s \omega^{y}\cdot r$, for all  $y^L \in L_{s(y)}$, $r^L \in L_{s(r)}$ and $r^R \in R_{s(r)}$. 
(iv) For all $n \in \mathbb{N}$,   $\frac{1}{2^n}\omega^y <_s \omega^{x}$ if $y \in R_{s(x)}$.
\end{lemma}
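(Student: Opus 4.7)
The plan is to establish the four parts in the order (i), then (ii) and (iii) (both of which invoke (i)), and finally (iv). Part (i) is foundational; parts (ii) and (iii) give canonical cut representations whose ``moreover'' simplicity clauses are corollaries of (i); and (iv) is a direct consequence of formula (1) together with a short tree-theoretic argument.

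For (i), I would proceed by induction on the tree-rank of $b \in \mathbb{R}$. By Proposition \ref{P:Bc}(ii), it suffices to verify that $\omega^y \cdot b$ lies strictly between $L_{s(\omega^y \cdot a)}$ and $R_{s(\omega^y \cdot a)}$ and that $\omega^y \cdot a \neq \omega^y \cdot b$; the latter is immediate from $a \neq b$. For the separation condition I would distinguish cases according to whether $a$ lies in $\mathbb{Z}$, in $\mathbb{D} - \mathbb{Z}$, or in $\mathbb{R} - \mathbb{D}$. In each case, the canonical cut representation of $\omega^y \cdot a$—derived either from the Conway-name structure (Proposition \ref{P:Cb}) for integer-scaled terms or from the lower-rank instances of (ii) and (iii) otherwise—exhibits a cofinal subclass of $L_{s(\omega^y \cdot a)}$ and a coinitial subclass of $R_{s(\omega^y \cdot a)}$, each consisting of surreals of the form $\omega^y \cdot c$ (and, for (iii), $\omega^y \cdot c \pm \omega^{y^L} \cdot n$) with $c <_s a$. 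The inductive hypothesis ensures $\omega^y \cdot c <_s \omega^y \cdot a$ for each such $c$, and a direct calculation then confirms that $\omega^y \cdot b$ lies on the correct side of each such predecessor.

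For (ii) and (iii), I would verify the stated cut equations by combining the Conway inductive definitions of addition and multiplication in ${\bf No}$ with Proposition \ref{P:Bc}(iii). The strict separation $\omega^y \cdot r^L < \omega^y \cdot r < \omega^y \cdot r^R$ follows from order-preservation of multiplication by $\omega^y > 0$; in (iii), the smallness of $\omega^{y^L}$ relative to $\omega^y$ ensures $\omega^y \cdot r^L + \omega^{y^L} \cdot n < \omega^y \cdot r < \omega^y \cdot r^R - \omega^{y^L} \cdot n$. The canonicity step—verifying that the proposed left (right) set is cofinal (coinitial) with $L_{s(\omega^y \cdot r)}$ ($R_{s(\omega^y \cdot r)}$)—is the core of the argument and rests on the structural analysis of the surreal tree in \cite{EH7}. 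The need for the extra $\omega^{y^L} \cdot n$ terms in (iii) is structural: when $L_{s(y)} \neq \varnothing$, the surreals $\omega^y \cdot r \pm \omega^{y^L} \cdot m$ are simpler than $\omega^y \cdot r$ yet lie strictly between $\omega^y \cdot r^L$ and $\omega^y \cdot r^R$, so the naive cut of (ii) fails to be canonical; this obstruction disappears under the hypothesis $L_{s(y)} = \varnothing$ in (ii). Once the cut equations are established, the ``moreover'' clauses in each part follow from (i) applied to the pairs $r^L <_s r$ and $r^R <_s r$. Finally, for (iv), since $y \in R_{s(x)}$, $y$ appears among the $x^R$'s in the canonical cut of $\omega^x$ given by formula (1), so $\frac{1}{2^n}\omega^y$ is among its right options; a short verification via induction on the tree-rank of $x$ and Proposition \ref{P:Bc}(ii) then yields $\frac{1}{2^n}\omega^y <_s \omega^x$.

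The main obstacle is the canonicity verification in (ii) and (iii): ensuring that the proposed cuts identify exactly the simplest surreal between the options, rather than admitting simpler surreals strictly between them. In (iii) this demands recognizing and compensating for the obstruction posed by the $\omega^{y^L} \cdot n$ terms, while in both cases it relies essentially on the detailed surreal-tree analysis of \cite{EH7}.
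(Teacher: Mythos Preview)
Your approach to (i) differs from the paper's and introduces a structural difficulty. The paper does not induct on tree-rank; instead, given $a <_s b$ in $\mathbb{R}$, it fixes a $c \in \mathbb{R}-\mathbb{D}$ with $b \le_s c$ and appeals directly to \cite[Theorems 3.13 and 3.16]{EH7}, which exhibit a cofinal subchain $(\omega^y \cdot c_n)_{n<\omega}$ of $pr_{\mathbf{No}}(\omega^y \cdot c)$ indexed by a cofinal subchain $(c_n)$ of $pr_{\mathbf{No}}(c)$. Since $a$ and $b$ both lie along this chain (or $b=c$), the relation $\omega^y\cdot a <_s \omega^y\cdot b$ follows at once. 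Your scheme instead invokes ``lower-rank instances of (ii) and (iii)'' inside the proof of (i), which forces a simultaneous induction you have not set up; and for $a\in\mathbb{Z}$ the reference to Proposition~\ref{P:Cb} does not actually furnish the predecessors of $\omega^y\cdot a$. For (ii) and (iii) the paper simply cites Gonshor \cite[Lemma 5.8(a)]{GO} for the cut identities, together with \cite[Theorem 3.18(ii)]{EH7} for the simplicity assertions in (iii), rather than re-deriving them from the multiplication formula.

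There is a genuine gap in your treatment of (iv). Formula (1) shows that $\frac{1}{2^n}\omega^y$ is a right \emph{option} in a cut whose value is $\omega^x$, but options in a cut $\{L\,|\,R\}$ are not automatically $<_s$-predecessors of its value: Proposition~\ref{P:Bc}(iii) only guarantees that $R$ is coinitial with $R_{s(\omega^x)}$, not that $R\subseteq R_{s(\omega^x)}$. Your ``short verification via induction on the tree-rank of $x$ and Proposition~\ref{P:Bc}(ii)'' would require knowing $L_{s(\frac{1}{2^n}\omega^y)}$ and $R_{s(\frac{1}{2^n}\omega^y)}$, which you have not identified. The paper closes this gap by first reducing (via transitivity of $<_s$) to the case where $x$ is the immediate left successor of $y$, observing that $pr_{\mathbf{No}}(\omega^x)$ is then a chain of limit length, and invoking \cite[Theorems 3.13 and 3.16]{EH7} to conclude that $(\frac{1}{2^n}\omega^y)_{n<\omega}$ is in fact a cofinal subchain of $pr_{\mathbf{No}}(\omega^x)$.
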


\begin{proof}
(i) follows immediately from \cite[Theorems 3.13 and 3.16]{EH7}, by considering a number $c \in \bR - \bD$ such that $b=c$ or $b <_s c$. Necessarily, $\omega^y \cdot c$ is the immediate successor of a chain of limit length having a cofinal subchain of the form $(\omega^y\cdot c_n )_{n<\omega}$ where $c$ is the immediate successor of $(c_n)_{n<\omega}$. As $a \in (c_n)_{n<\omega}$ and either $b \in (c_n)_{n<\omega}$ or $b=c$, it is the case that $\omega^y\cdot a \in (\omega^y\cdot c_n )_{n<\omega}$ and either $\omega^y \cdot b \in (\omega^y\cdot c_n )_{n<\omega}$ or $\omega^y \cdot b = \omega^y\cdot c$. In either case, $\omega^y\cdot a <_s \omega^y \cdot b$. For (ii), if $r \in \mathbb{R}-\mathbb{D}$, the result can be proved from (i) by simply forming the specified cut in the just-said cofinal subchain; and if $L_{s(y)}= \varnothing$ and $r \in \mathbb{D}-\mathbb{Z}$, the result follows from (i) and the second part of Gonshor's Lemma 5.8(a) from \cite{GO}. (iii) follows from the first part of Gonshor's Lemma 5.8(a) from \cite{GO} and repeated applications of \cite[Theorem 3.18(ii)]{EH7}. For (iv), suppose without loss of generality that $x$ is the immediate left successor of $y$; for if it is not, the immediate left successor of $y$ is in $R_{s(x)}$ and if we show that $\frac{1}{2^n}\omega^y$ is simpler than the immediate left successor of $y$ for each $n$, then it follows that $\frac{1}{2^n}\omega^y$ is simpler than $x$ as well. Consider the chain $pr_{\textbf{No}}(\omega^x)$ of predecessors of $\omega^x$. Clearly, this is a chain of limit length, as $\omega^x$ is the immediate successor of another surreal number if and only if $x=0$, in which case $R_{s(x)} = \varnothing$. By \cite[Theorems 3.13 and 3.16]{EH7}, we may conclude that $pr_{\textbf{No}}(\omega^x)$ contains a cofinal chain of the form $(\omega^{y}\cdot a_n)_{n<\omega}$ where $a_n=\frac{1}{2^n}$ for each $n$. As this chain is contained in $pr_{\textbf{No}}(\omega^x)$, $\frac{1}{2^n}\omega^y <_s \omega^{x}$ for each $n$.
\end{proof}

Let $\bR((t^\Gamma))_{\bf On}$ be the ordered group (ordered domain; ordered field) of power series (defined \'a la Hahn \cite{H}) consisting of all formal power series of the form $\sum_{\alpha < \beta} r_\alpha t^{y_\alpha}$ where $(y_\alpha)_{\alpha < \beta \in {\bf On}}$ is a possibly empty descending sequence of elements of an ordered class (ordered monoid; ordered abelian group) $\Gamma$ and $r_\alpha \in \bR - \{0\}$ for each $\alpha < \beta$. $\bR((t^\Gamma))_{\bf On}$ is a set (often simply written $\bR((t^\Gamma))$) if $\Gamma$ is a set, and a proper class otherwise. An element $x \in \bR((t^\Gamma))_{\bf On}$ is said to be a \textit{proper truncation} of $\sum_{\alpha < \beta} r_\alpha t^{y_\alpha} \in \bR((t^\Gamma))_{\bf On}$ if $x = \sum_{\alpha < \sigma} r_\alpha t^{y_\alpha}$ for some $\sigma < \beta$. A subgroup (subdomain; subfield) $A$ of $\bR((t^\Gamma))_{\bf On}$ is said to be \textit{truncation closed} if every proper truncation of every member of $A$ is itself a member of $A$. A subgroup (subdomain; subfield) $A$ of $\bR((t^\Gamma))_{\bf On}$ is said to be \textit{cross sectional} if $\{t^y : y \in \Gamma \} \subseteq A$. For a truncation closed, cross sectional subgroup (subdomain; subfield) $A$ of $\bR((t^\Gamma))_{\bf On}$, the set  $\bR_y = \{r\ \in \bR : rt^y \in A \}$ is an Archimedean ordered group (domain; field) which we will call the \textit{$y$-coefficient group (domain; field)} of $A$.

\begin{proposition}[\cite{EH5}, \cite{EH8}]
\label{prop 5}
There is an isomorphism of ordered groups from {\bf No} onto $\bR((t^{\bf No}))_{\bf On}$ that sends each surreal number $\sum_{\alpha < \beta} \omega^{y_\alpha} \cdot r_\alpha$ to $\sum_{\alpha < \beta} r_\alpha t^{y_\alpha}$. The isomorphism is in fact an isomorphism of ordered domains and, hence, of ordered fields.

\end{proposition}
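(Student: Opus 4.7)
The plan is to define the map $\Phi: {\bf No} \to \bR((t^{\bf No}))_{\bf On}$ by sending a surreal $x$ with Conway normal form $\sum_{\alpha < \beta} \omega^{y_\alpha}\cdot r_\alpha$ to the formal series $\sum_{\alpha < \beta} r_\alpha t^{y_\alpha}$, and then verify in sequence that $\Phi$ is (a) well-defined and bijective, (b) order-preserving, (c) additive, and (d) multiplicative. Well-definedness amounts to uniqueness of the Conway normal form, which is provided by the previously cited work; surjectivity is immediate because any formal power series $\sum r_\alpha t^{y_\alpha}$ has a corresponding Conway normal form by the structural characterization recalled in Proposition~\ref{thrm15ehr01}(iii), and injectivity follows from uniqueness together with the fact that two different formal series differ in some coefficient.

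For order preservation, I would compare a positive surreal $x = \sum_{\alpha<\beta}\omega^{y_\alpha}\cdot r_\alpha$ to $0$ by induction on $\beta$: Proposition~\ref{thrm15ehr01}(ii) together with Lemma~\ref{squeeze}(i) shows that the sign of $x$ is determined by the sign of the leading coefficient $r_0$, exactly as the sign of an element of $\bR((t^{\bf No}))_{\bf On}$ is determined by its leading coefficient. Applied to a difference $x - x'$, this gives $x < x' \Longleftrightarrow \Phi(x) < \Phi(x')$ in the lexicographic Hahn ordering, provided additivity has been established. Additivity is then a bookkeeping exercise: Conway's (and Gonshor's) formula for the sum of two normal forms shows that adding in $\bf No$ collects terms with the same leader $\omega^{y}$ into a single term $\omega^{y}\cdot(r+r')$, discarding summands with zero coefficient, which is precisely the addition rule in the Hahn group $\bR((t^{\bf No}))_{\bf On}$.

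The main obstacle is preservation of multiplication, since this requires that the Conway normal form of a product $x\cdot y$ agrees with the Cauchy product of the corresponding Hahn series. For this I would rely on two cited ingredients: first, Conway's identity $\omega^{y}\cdot\omega^{z} = \omega^{y+z}$ on the leaders (a consequence of the recursive definition of $\omega^y$ combined with the normal-form recursion); and second, the fact that Conway multiplication distributes over the transfinite sums appearing in normal forms, so that
\begin{equation*}
\Bigl(\sum_{\alpha<\beta}\omega^{y_\alpha}\cdot r_\alpha\Bigr)\cdot\Bigl(\sum_{\gamma<\delta}\omega^{z_\gamma}\cdot s_\gamma\Bigr) = \sum_{(\alpha,\gamma)}\omega^{y_\alpha + z_\gamma}\cdot (r_\alpha s_\gamma),
\end{equation*}
where the right-hand side is reorganized by grouping the finitely many pairs contributing to each fixed exponent (a consequence of the Neumann-style argument that the support of the product has reverse well-ordered support). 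Since this is exactly the Cauchy product rule in $\bR((t^{\bf No}))_{\bf On}$, one concludes $\Phi(xy) = \Phi(x)\Phi(y)$.

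Tying these together, $\Phi$ is an order-preserving ring isomorphism. I expect the multiplication step to be the genuinely non-trivial one and would probably execute it by first handling the monomial case $\omega^{y}\cdot r \cdot \omega^{z}\cdot s = \omega^{y+z}\cdot rs$, then extending by transfinite distributivity, and finally observing that the resulting family has reverse well-ordered support so that the grouping into Conway normal form is legitimate and matches the Hahn series product.
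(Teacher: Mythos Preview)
The paper does not supply a proof of this proposition: it is stated in the preliminaries with a citation to \cite{EH5} and \cite{EH8} and is invoked as a known result. So there is no ``paper's own proof'' to compare against; your sketch is essentially the argument one finds in the cited sources (Conway's normal form theory as developed by Gonshor and Ehrlich), and it is correct in outline.

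One small remark: your appeal to Proposition~\ref{thrm15ehr01}(iii) for surjectivity is a bit off target, since that item is about the canonical cut representation of a limit-length normal form, not about the existence of a surreal realizing an arbitrary formal series. Surjectivity is rather the statement, implicit in the paper's description of Conway names in \S3, that for every strictly decreasing sequence $(y_\alpha)_{\alpha<\beta}$ in ${\bf No}$ and every sequence $(r_\alpha)_{\alpha<\beta}$ of nonzero reals there is a surreal number with exactly that Conway name; this is established by transfinite recursion in the cited references. With that adjustment your plan is sound, and you are right that the multiplicative step---resting on $\omega^y\cdot\omega^z=\omega^{y+z}$ together with transfinite distributivity of Conway's product over normal-form sums---is where the real work lies.
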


\section{Preliminaries III: s-hierarchical ordered structures }

Following \cite[Definition 2]{EH5}, $\left\langle A,+,<,<_{s},0\right\rangle $ is said
to be an {\it s-hierarchical ordered group} if (i) $\left\langle
A,+,<,0\right\rangle $ is an ordered abelian group; (ii) $\left\langle
A,<,<_{s}\right\rangle $ is a lexicographically ordered binary tree; and
(iii) for all $x,y\in A$ 
$$
x+y=\left\{ x^{L}+y,x+y^{L} \, | \, x^{R}+y,x+y^{R}\right\} \text{.\medskip } 
$$

 $\left\langle A,+,\cdot ,<,<_{s},0,1\right\rangle $ will be said to be an {\it s-hierarchical ordered domain }if (i) $\left\langle
A,+,\cdot ,<,0,1\right\rangle $ is an ordered domain; (ii) $\left\langle
A,+,<,<_{s},0\right\rangle $ is an s-hierarchical ordered group; and (iii)
for all $x,y\in A$ 
\begin{eqnarray*}
\!\!\!xy &=&\{x^{L}y+xy^{L}-x^{L}y^{L},x^{R}y+xy^{R}-x^{R}y^{R} \,| \\
&&\quad \quad \qquad \qquad \qquad \text{\quad }
x^{L}y+xy^{R}-x^{L}y^{R},x^{R}y+xy^{L}-x^{R}y^{L}\}\text{.}
\end{eqnarray*}

\noindent
Moreover, $\left\langle A,+,\cdot ,<,<_{s},0\right\rangle $ will be
said to be an {\it s-hierarchical ordered $K$-module} if
(i) $K$ is an s-hierarchical ordered domain, (ii) $A$ is an s-hierarchical
ordered group, and (iii) $A$ is an ordered $K$-module in which for all 
$x\in K$ and all $y\in A$%
\begin{eqnarray*}
\!\!\!xy &=&\{x^{L}y+xy^{L}-x^{L}y^{L},x^{R}y+xy^{R}-x^{R}y^{R} \, | \\
&&\quad \quad \qquad \qquad \qquad \text{\quad }%
x^{L}y+xy^{R}-x^{L}y^{R},x^{R}y+xy^{L}-x^{R}y^{L}\}\text{.}
\end{eqnarray*}

s-hierarchical ordered domains and modules are generalizations of the s-hierarchical ordered fields and vector spaces introduced in \cite{EH5}. In virtue of Conway's field operations, $\left\langle {\bf {No}},+,\cdot ,<,<_{s},0,1\right\rangle $ is an s-hierarchical ordered domain. In fact, it is (up to isomorphism) the unique universal and unique maximal s-hierarchical ordered domain (in the sense of \cite[page 1239]{EH5}). Moreover, if $K$ is an s-hierarchical ordered subdomain of {\bf {No}}, then {\bf {No}} is an s-hierarchical ordered $K$-module. Furthermore, extending the argument for s-hierarchical ordered subfields and subspaces of {\bf {No}} from \cite[page 1236]{EH5}, it is evident that

\begin{proposition}

The s-hierarchical ordered subdomains and submodules of 
{\bf {No}} coincide with the initial subdomains and the initial submodules of {\bf {No}}, respectively. 

\end{proposition}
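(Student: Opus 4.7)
The plan is to establish both directions (for subdomains and, separately, submodules) by adapting the argument for s-hierarchical ordered subfields and subspaces of ${\bf No}$ from \cite{EH5}. The extension should be essentially mechanical, since the cut formulas defining the s-hierarchical conditions for $+$, $\cdot$, and scalar multiplication are identical in the field/vector-space and the domain/module settings, and the absence of multiplicative inverses in a domain plays no role in the tree-theoretic analysis.

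For the implication from initial to s-hierarchical, I would fix an initial subdomain $A$ of ${\bf No}$ and observe that initiality gives $L_{s(x)}^A = L_{s(x)}^{\bf No}$ and $R_{s(x)}^A = R_{s(x)}^{\bf No}$ for every $x \in A$, so $\langle A, <, <_s \rangle$ inherits the structure of a lexicographically ordered binary tree. For $x, y \in A$, the s-hierarchical representation in ${\bf No}$,
$$x + y = \{x^L + y,\ x + y^L \mid x^R + y,\ x + y^R\}_{\bf No},$$
has all its left and right entries in $A$ by closure under $+$, and since $x + y \in A$ is the simplest ${\bf No}$-element in the specified interval, it is \emph{a fortiori} the simplest $A$-element there. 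The multiplication cut formula transfers by the same reasoning, giving the s-hierarchical domain condition. The submodule case runs identically with the scalar multiplication cut formula in place of the multiplication cut formula.

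For the converse direction, I would suppose $A$ is an s-hierarchical ordered subdomain of ${\bf No}$ and show that $A$ is initial by transfinite induction on the ${\bf No}$-tree-rank $\alpha$ of $x \in A$, aiming at $pr_{\bf No}(x) \subseteq A$. The base case $x = 0$ is immediate. For the inductive step, I would assume the conclusion for every $y \in A$ of ${\bf No}$-tree-rank less than $\alpha$ and exploit the s-hierarchical hypothesis: since $\langle A, <, <_s \rangle$ is a lexicographically ordered binary tree, Proposition \ref{P:Bc}(i) gives $x = \{L_{s(x)}^A \mid R_{s(x)}^A\}_A$, while in ${\bf No}$ we also have $x = \{L_{s(x)}^{\bf No} \mid R_{s(x)}^{\bf No}\}_{\bf No}$. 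Applying the s-hierarchical addition formula to sums $x + y$ with $y \in A$ of smaller rank, and combining with the inductive hypothesis via the cut-uniqueness characterization of Proposition \ref{P:Bc}(iii), one forces $L_{s(x)}^A$ to be cofinal with $L_{s(x)}^{\bf No}$ and $R_{s(x)}^A$ to be coinitial with $R_{s(x)}^{\bf No}$; the closure of $A$ under sums and products then rules out any ${\bf No}$-predecessor of $x$ lying outside $A$. The hard part is exactly this inductive step, where one must carefully translate the cut-uniqueness of Proposition \ref{P:Bc}(iii) into genuine initiality of $A$ as a subtree of ${\bf No}$; once this is done, the submodule case follows by the same argument using the scalar multiplication cut formula.
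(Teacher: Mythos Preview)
Your approach is exactly the paper's: the paper does not give an independent proof but simply states that the result is evident by ``extending the argument for s-hierarchical ordered subfields and subspaces of ${\bf No}$ from \cite[page 1236]{EH5}'', and your proposal is precisely to carry out that extension, observing that the cut formulas for $+$, $\cdot$, and scalar multiplication are formally identical in the domain/module and field/vector-space settings. Your elaboration of the two directions goes beyond what the paper itself writes down, but the strategy is the same.
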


Extending the notation employed in \cite{EH5}, if $A$ is an ordered module and $B \subseteq A$, then by $(B)_{A}$ we mean \emph{the ordered submodule of $A$ generated by $B$.} 

The next preparatory result is a modest generalization of \cite[Theorem 6]{EH5}.

\begin{lemma}
\label{thrm6anlg}
Let $M'$ be an s-hierarchical ordered $K$-module and $M$ be an initial submodule of $M'$. If $(L,R)$ is a partition of $M$ and $b=\{L\,| \, R\}^{M'}$, then $(M\cup \{b\})_{M'}$ is an initial submodule of $M'$.
\end{lemma}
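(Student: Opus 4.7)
If $b\in M$ the conclusion is immediate, so I would assume $b\notin M$, in which case $(M\cup\{b\})_{M'}=M+Kb$. The proof then proceeds in two stages.

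Stage 1 is to show $pr_{M'}(b)\subseteq M$, so that $M\cup\{b\}$ is already initial in $M'$. Argue by minimum counter-example: let $y\in pr_{M'}(b)\setminus M$ have minimum tree-rank in $M'$. Minimality forces $pr_{M'}(y)\subseteq M$, so by Proposition \ref{P:Bc}(i), $y=\{L_{s(y)}^{M'}\mid R_{s(y)}^{M'}\}^{M'}$ with both cut-sets in $M$. If some $z\in M$ lies strictly between $L_{s(y)}^{M'}$ and $R_{s(y)}^{M'}$, then the simplicity of $y$ gives $y<_s z$, and initiality of $M$ yields $y\in pr_{M'}(z)=pr_M(z)\subseteq M$, a contradiction. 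Otherwise, WLOG $y<b$ (the case $y>b$ is symmetric); Proposition \ref{P:Bc}(iii) (cofinality of $L$ with $L_{s(b)}^{M'}$) provides $\ell\in L$ with $y<\ell$, and since $\ell\in M$ lies outside the $y$-cut there is $r\in R_{s(y)}^{M'}$ with $r\leq\ell$. But $r<_s y<_s b$ together with $r>y$ and $y<b$ places $b$ in the left subtree of $r$ under the lexicographic binary tree order, so $b<r\leq\ell<b$, a contradiction.

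Stage 2 is to show $M+Kb$ is initial in $M'$. Again argue by minimum counter-example: let $y\in M'\setminus(M+Kb)$ satisfy $y<_s x$ for some $x\in M+Kb$, chosen with minimum tree-rank; then $pr_{M'}(y)\subseteq M+Kb$. If some $m\in M$ lies in the $y$-cut, initiality of $M$ gives $y\in M$, contradiction. If $b$ lies in the $y$-cut, then $y<_s b$ and Stage 1 gives $y\in M$, contradiction. In the remaining case, neither $M$ nor $\{b\}$ meets the $y$-cut, yet some $x=m+kb$ with $k\neq 0$ does. I would then write each element of $L_{s(y)}^{M'}\cup R_{s(y)}^{M'}$ as $m_i+k_i b$ and, using the s-hierarchical sum and product cut formulas together with Proposition \ref{P:Bc}(iii), mimic the Stage 1 tree-argument: analyze the tree position of $b$ relative to these predecessors to extract a contradiction from the cofinality of $(L,R)$ with $(L_{s(b)}^{M'},R_{s(b)}^{M'})$.

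The main obstacle is this last case of Stage 2. In Stage 1, the bounding element $\ell$ lived in $M$, enabling a clean comparison with $b$'s tree position; in Stage 2 the natural bounds live in $M+Kb$, so their $b$-components must be controlled via the sum and product formulas. I anticipate the delicate step will be locating a predecessor of $y$ of the form $m_i+k_i b$ whose position in $M'$, combined with the lexicographic tree order, produces the desired contradiction with the defining cut $(L,R)$ of $b$.
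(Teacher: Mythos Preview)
Your Stage~1 is correct. Stage~2, however, has a genuine gap: you name the obstacle in the final case but do not resolve it, and the minimum-counterexample strategy on $\rho(y)$ does not obviously close there. Knowing only that $pr_{M'}(y)\subseteq M+Kb$, that some $m+kb$ with $k\neq 0$ lies in the $y$-cut, and that neither any element of $M$ nor $b$ itself does, gives you almost no control over the tree position of $y$. The predecessors $m_i+k_ib$ of $y$ bear no evident relation to the defining cut $(L,R)$ of $b$, and there is no mechanism in sight for ``factoring out'' the $b$-component so as to reduce to a Stage-1 style comparison. Vague appeal to the sum and product cut formulas is not enough here; one must say precisely which inequalities they yield and why those contradict the choice of $y$.

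The paper's own proof is by citation to \cite[Theorem~6]{EH5}, and that argument is organized differently from yours. Rather than a global minimum counterexample, one exhibits $M+Kb$ as a transfinite increasing union of initial subclasses, each obtained from its predecessor by adjoining a single element that fills a partition-gap, so that your Stage-1 argument (which uses only that the ambient class is initial, not that it is a submodule) can be invoked at every stage. What makes this work is the uniformity of the s-hierarchical sum and product formulas combined with the hypothesis that $(L,R)$ is a \emph{partition} of $M$: since $L$ is cofinal with $L_{s(b)}$ and $R$ coinitial with $R_{s(b)}$, one may use all of $L$ and all of $R$ as the representatives of $b$ when expanding $kb$ and then $m+kb$; the resulting left and right options, as $k',m'$ range over earlier stages, then exhaust the previously constructed initial class. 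Your Stage-2 sketch never exploits the partition hypothesis in this way, and that is exactly where the argument stalls.
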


\begin{proof}
Except for replacing the references to ``$K$-vector spaces" and ``subspaces" with references to ``$K$-modules" and ``submodules", the proof is the same as the proof of  \cite[Theorem 6]{EH5}.
\end{proof}

%------------------------------------------------
\section{Initial subgroups and submodules of {\bf No}}
\label{sec:groups}

To fully characterize the initial subgroups of $\bf {No}$, we must first characterize the initial subgroups of $\bR$.

\begin{lemma}
\label{initarchgroups}
An ordered group $G$ is an initial subgroup of $\bR$ if and only if either $G=\{0\}$, $\mathbb{D} \subseteq G$ or $G=\{\frac{z}{2^m}: z \in \bZ\}$ for some $m \in \bN$.
\end{lemma}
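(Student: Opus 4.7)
The plan is to prove the two implications separately. The backward direction is essentially routine: I verify that each of the three candidate subgroups is closed under taking tree-predecessors in $\bR$ (equivalently, in $\bf No$). The forward direction is the substantive content; it reduces to controlling which $2$-adic denominators can appear in $G$, using the recursive description of dyadics supplied by Proposition~\ref{P:Cb} together with the group operation of $G$.

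For the ($\Leftarrow$) direction, $\{0\}$ is trivially initial. For $G = \{z/2^m : z \in \bZ\}$, I appeal to the recursive characterization of dyadics in Proposition~\ref{P:Cb}: every tree-predecessor of $z/2^m$ is itself a dyadic whose reduced denominator divides $2^m$, so $G$ is downward-closed under $<_s$. For any $G$ with $\mathbb{D} \subseteq G \subseteq \bR$, every $x \in G$ is either a dyadic (of finite tree-rank) or a non-dyadic real (of tree-rank $\omega$), and in either case all tree-predecessors of $x$ lie in $\mathbb{D}$, hence in $G$.

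For the ($\Rightarrow$) direction, assume $G$ is a nontrivial initial subgroup of $\bR$. Since the simplest nonzero elements of $\bR$ are $\pm 1$, initiality forces $1 \in G$, and so $\bZ \subseteq G$ by the group operation. Set $S = \{k \in \bN : 1/2^k \in G\}$; this contains $0$, and since $1/2^j <_s 1/2^k$ whenever $j < k$, initiality makes $S$ an initial segment of $\bN$. If $S = \bN$ then the group operation yields $\mathbb{D} \subseteq G$, placing us in the second case of the conclusion. Otherwise let $m = \max S$. The inclusion $\{z/2^m : z \in \bZ\} \subseteq G$ is immediate from $1/2^m \in G$, and I establish the reverse inclusion by contradiction. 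Suppose $x \in G$ does not lie in $\{z/2^m : z \in \bZ\}$. If $x$ is dyadic, then in lowest terms $x = a/2^\ell$ with $a$ odd and $\ell > m$. If $x$ is a non-dyadic real, then its dyadic tree-predecessors lie in $G$ by initiality and have unbounded $2$-adic denominators---otherwise they would all lie in the discrete subgroup $\{z/2^M : z \in \bZ\}$ and so could not accumulate to $x$---so some predecessor reduces to $a/2^\ell$ with $a$ odd and $\ell > m$.

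The crux is the following sub-claim, which will contradict the maximality of $m$: if $a/2^\ell \in G$ with $a$ odd and $\ell \geq 1$, then $1/2^\ell \in G$. To prove it I use the recursive description of dyadics in Proposition~\ref{P:Cb}: since $a/2^\ell$ is not an integer, it equals the arithmetic mean of its left and right predecessors of greatest tree-rank, and a short calculation (solving for $d_L, d_R$ given $d_L + d_R = a/2^{\ell-1}$ and $d_R - d_L = 1/2^{\ell-1}$) identifies these as $(a-1)/2^\ell$ and $(a+1)/2^\ell$, both of the form $c/2^{\ell-1}$ with $c \in \bZ$. Initiality places the one of greatest tree-rank (the immediate tree-predecessor of $a/2^\ell$) in $G$, and subtracting it from $a/2^\ell$ yields $\pm 1/2^\ell \in G$. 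The main obstacle is thus the explicit identification of these two greatest tree-predecessors of a dyadic, which follows directly from Proposition~\ref{P:Cb}; everything else in the argument is bookkeeping.
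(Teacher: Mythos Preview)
Your proof is correct and follows essentially the same strategy as the paper's: both hinge on the observation that from any $a/2^\ell \in G$ with $a$ odd one can extract $1/2^\ell \in G$ by subtracting a tree-predecessor, and then the dichotomy (bounded versus unbounded $2$-adic denominator) yields the three cases. Your version is more explicit than the paper's---you spell out why a non-dyadic element of $G$ forces unbounded denominators among its predecessors and you isolate the sub-claim about extracting $1/2^\ell$, whereas the paper compresses these into the phrases ``there is a greatest $m$'' and ``using closure under subtraction''---but the underlying argument is the same.
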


\begin{proof}
It follows from the definition of $\bR$ and Proposition \ref{P:Cb} that the subgroups of $\bR$ specified in the statement of the lemma are initial. Now suppose $G$ is a nontrivial initial subgroup of $\bR$ that does not contain $\bD$. Then there is a greatest $m \in \bN$ such that $\frac{z}{2^m} \in G$ for some $z \in \bZ$. Using closure under subtraction and the fact that we must always have 1 and, therefore $\bZ$ in any nontrivial initial subgroup of $\textbf{No}$, it follows that $\frac{1}{2^m} \in G$ and, by closure under addition and subtraction, $\{\frac{z}{2^m}: z \in \bZ\} \subseteq G$. But since $m$ is the greatest member of $\bN$ for which $\frac{z}{2^m} \in G$ for some $z \in \bZ$, the inclusion is not proper, thereby proving the lemma.
\end{proof}

Our final  preparatory result follows immediately from the definitions.

\begin{proposition}
\label{modgengroup}
If $G$ is a cross sectional, truncation closed subgroup of $\bR((t^\Gamma))_{\bf On}$ and 
\begin{equation*}
Z = \left\{ \sum_{\mu < \nu}r_\mu t^{y_\mu} \in G : \nu \textit{ is an infinite limit ordinal and }r_0 = 1 \right\},
\end{equation*}
then $\{rt^y : y \in \Gamma,\ r \in \bR_y\}\cup Z$ constitutes a class of generators for $G$ considered as a $\bZ$-module.
\end{proposition}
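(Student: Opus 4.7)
The plan is to show, by transfinite induction on the length $\beta$ of the canonical expansion $x = \sum_{\alpha < \beta} r_\alpha t^{y_\alpha}$ (with all $r_\alpha \ne 0$), that every $x \in G$ is a finite $\bZ$-linear combination of elements from $\{rt^y : y \in \Gamma,\ r \in \bR_y\} \cup Z$.

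The base case $\beta = 0$ gives $x = 0$, which is the empty combination. For the successor case $\beta = \gamma + 1$, I would appeal to truncation closure to place $x' := \sum_{\alpha < \gamma} r_\alpha t^{y_\alpha}$ in $G$; then $r_\gamma t^{y_\gamma} = x - x' \in G$ shows $r_\gamma \in \bR_{y_\gamma}$, so $r_\gamma t^{y_\gamma}$ is a monomial generator, and the inductive hypothesis applied to $x'$, which has length $\gamma < \beta$, gives a decomposition of $x = x' + r_\gamma t^{y_\gamma}$ into generators.

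The interesting case is when $\beta$ is an infinite limit ordinal, where one cannot shrink the length via truncation. Here I would produce a single element of $Z$ that agrees with $x$ everywhere except at the leading exponent. Since $r_0 t^{y_0}$ is the difference of two truncations of $x$, it lies in $G$, and $t^{y_0}$ lies in $G$ by cross sectionality; hence $r_0, 1 \in \bR_{y_0}$, and so $1 - r_0 \in \bR_{y_0}$ since $\bR_{y_0}$ is a subgroup of $\bR$. Defining $z := x + (1 - r_0) t^{y_0} \in G$, the coefficient of $t^{y_0}$ in $z$ is $r_0 + (1 - r_0) = 1$, while the coefficients at all other exponents in the expansion of $x$ are unchanged, so $z$ has leading coefficient $1$ and length $\beta$; thus $z \in Z$. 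The identity $x = z + (r_0 - 1) t^{y_0}$ then expresses $x$ as a $\bZ$-combination of the generator $z \in Z$ and the monomial generator $(r_0 - 1) t^{y_0}$.

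As the authors indicate, the result essentially unfolds from the definitions, so no serious obstacle is anticipated; the mildly nontrivial point is the limit-case construction of $z$, which leverages the subgroup $\bR_{y_0}$ to ``normalize'' the leading coefficient to $1$. The only care needed throughout is to verify at each step that the elements under consideration really lie in $G$, which is automatic from the hypotheses of truncation closure, cross sectionality, and closure under the additive group operations.
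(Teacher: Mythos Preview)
Your proposal is correct and supplies precisely the detail the paper omits: the paper's own ``proof'' is the single sentence that the result ``follows immediately from the definitions,'' with no argument given. Your transfinite induction on the length $\beta$ is the natural way to unpack that sentence, and your handling of the limit case via the normalization $z = x + (1 - r_0)t^{y_0}$ is exactly the observation that makes $Z$ the right set of additional generators beyond the monomials.
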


As is noted above, {\bf No}  is isomorphic to $\bR((t^{\bf{No}}))_{\bf On}$. We now prove more generally

\begin{theorem}
\label{initgroups}
An ordered abelian group is isomorphic to an initial subgroup of {\bf No} if and only if it is isomorphic to a truncation closed, cross sectional subgroup $G$ of a power series group $\bR((t^\Gamma))_{\bf On}$, where (i) $\Gamma$ is isomorphic to an initial ordered subclass of {\bf No},
(ii) every $y$-coefficient group $\bR_y$ of $G$ is an initial subgroup of $\bR$, and
(iii) $\bD \subseteq \bR_y$ for all $x, y \in \Gamma$ where $y \in R_{s(x)}$.
\end{theorem}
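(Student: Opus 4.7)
My plan is to establish the biconditional in two directions, making essential use of Propositions \ref{P:Bc}, \ref{thrm15ehr01}, \ref{prop 5}, \ref{modgengroup}, together with Lemmas \ref{squeeze} and \ref{thrm6anlg}.

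For the forward direction, I would let $A$ be an initial subgroup of ${\bf No}$ and use Proposition \ref{prop 5} to identify $A$ with a subgroup $G$ of $\bR((t^{\bf No}))_{\bf On}$. Setting $\Gamma := \{y \in {\bf No} : \omega^y \in A\}$, the required properties follow essentially routinely: Proposition \ref{thrm15ehr01}(i) yields that $\Gamma$ is initial in ${\bf No}$; Proposition \ref{thrm15ehr01}(ii) gives truncation closure since proper truncations are simpler; cross sectionality comes from taking differences of consecutive truncations of $x \in A$ to isolate $\omega^{y_\mu} r_\mu \in A$, then applying Lemma \ref{squeeze}(i) with $\pm 1 \leq_s r_\mu$ to conclude $\omega^{y_\mu} \in A$; Lemma \ref{squeeze}(i) again yields initiality of each $\bR_y$; and Lemma \ref{squeeze}(iv), combined with closure of $\bR_y$ under addition, yields condition (iii).

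For the converse, given $G$ satisfying the stated conditions, I would identify $\Gamma$ with its image as an initial subclass of ${\bf No}$ and transfer $G$ to a subset $A \subseteq {\bf No}$ via Proposition \ref{prop 5}. To show $A$ is an initial subgroup, my strategy is to invoke Proposition \ref{modgengroup} to obtain a $\bZ$-module generating set for $G$ consisting of type (i) elements $rt^y$ and type (ii) monic limit truncations. I would well-order the generators by the tree rank of their surreal counterparts and build $A$ as an ascending union of initial submodules $M_\alpha$ of ${\bf No}$ by adjoining generators one at a time via Lemma \ref{thrm6anlg}. At each stage, the verification required is that the next generator $b$ satisfies $b = \{L \, | \, R\}^{\bf No}$ for the partition $(L, R)$ of $M_\alpha$ determined by comparison with $b$; by Proposition \ref{P:Bc}(iii), this amounts to $L$ being cofinal with $L_{s(b)}$ and $R$ being coinitial with $R_{s(b)}$.

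For type (i) generators $b = \omega^y r$, the cut representations of Lemma \ref{squeeze}(ii) and (iii) furnish the required cofinal/coinitial options: each $\omega^y r'$ (for $r' <_s r$) lies in $A$ since $\bR_y$ is initial; each $\omega^{y^L} \cdot n$ (for $y^L \in L_{s(y)}$ and $n \in \bN$) lies in $A$ since $y^L \in \Gamma$ by initiality and cross sectionality yields $\bZ \subseteq \bR_{y^L}$; and all such options have strictly smaller tree rank, so they have been processed at earlier stages. The main obstacle concerns type (ii) generators $b = \sum_{\mu < \nu} \omega^{y_\mu} r_\mu$ at infinite limit length: the cut representation of Proposition \ref{thrm15ehr01}(iii) requires options of the form $b_\mu + \omega^{y_\mu}(r_\mu \pm 2^{-n})$, which belong to $A$ precisely when $\bD \subseteq \bR_{y_\mu}$. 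This is what condition (iii) is engineered to provide: for each exponent $y_\mu$ appearing in the Conway name of an element of $A$, one must exhibit some $x \in \Gamma$ with $y_\mu \in R_{s(x)}$ and then invoke (iii). Extracting such an $x$ from the tree-theoretic structure imposed by the decreasing sequence $(y_\alpha)_{\alpha < \nu}$ together with the downward closure of $\Gamma$ is the delicate point; once it is carried out in each of the relevant cases, the cut options fall within $M_\alpha$, Lemma \ref{thrm6anlg} applies, and the directed union $A = \bigcup_\alpha M_\alpha$ is an initial subgroup of ${\bf No}$.
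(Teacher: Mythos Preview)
Your forward direction and the overall architecture of the converse (well-order a $\bZ$-module generating set, adjoin generators one at a time via Lemma~\ref{thrm6anlg}) match the paper. The gap is in how you allocate condition~(iii).

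First, you never treat the leader $b=\omega^{y}$ itself. Lemma~\ref{squeeze}(ii)--(iii) only give cut representations for $\omega^{y}\cdot r$ with $r\in\bR\setminus\bZ$; for $r=1$ you must use equation~(1),
\[
\omega^{y}=\Bigl\{\,0,\;n\omega^{y^{L}}\ \Big|\ \tfrac{1}{2^{n}}\omega^{y^{R}}\,\Bigr\},
\]
and the right options $\tfrac{1}{2^{n}}\omega^{y^{R}}$ lie in the previously built submodule only if $\tfrac{1}{2^{n}}\in\bR_{y^{R}}$. Since each $y^{R}\in R_{s}(y)$ with $y\in\Gamma$, condition~(iii) is exactly what forces $\bD\subseteq\bR_{y^{R}}$ here. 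This is the step in the paper where (iii) is actually invoked, and it is absent from your outline.

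Second, your proposed use of (iii) for the limit case cannot succeed. You want, for every exponent $y_{\mu}$ occurring in a limit-length element, some $x\in\Gamma$ with $y_{\mu}\in R_{s}(x)$; but if $y_{\mu}$ is an ordinal it lies to the right of all its predecessors, so $y_{\mu}\in R_{s}(x)$ is impossible for any $x$. Thus the ``delicate point'' you flag is not merely delicate but false in general. Fortunately the limit case does not need $\bD\subseteq\bR_{y_{\mu}}$: since $\beta$ is a limit ordinal, the integer-shifted options
\[
\sum_{\alpha\le\mu}\omega^{y_{\alpha}}\cdot r_{\alpha}\ \pm\ \omega^{y_{\mu}}
\]
(which require only $\bZ\subseteq\bR_{y_{\mu}}$) are already cofinal and coinitial with the options of Proposition~\ref{thrm15ehr01}(iii) as $\mu$ ranges over $\beta$, so Proposition~\ref{P:Bc}(iii) still yields the desired partition. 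In short: move condition~(iii) from the limit case to the leader case, and the argument goes through.
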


\begin{proof}

 Let $A$ be an initial subgroup of {\bf No}, $\textit{Lead}(A)$ be the class of leaders in $A$, and $\Gamma=\{y: \omega^y \in \textit{Lead}(A)\}$. Following   \cite[Definition 14]{EH5},  a class $B$ of surreal numbers is said to be \emph{approximation complete} if $\sum_{\alpha < \sigma} \omega^{y_\alpha} \cdot r_\alpha \in B$ whenever $\sum_{\alpha < \beta} \omega^{y_\alpha} \cdot r_\alpha \in B$ and $\sigma < \beta$. Since $A$ is initial, it follows from parts (i) and (ii) of Proposition \ref{thrm15ehr01} that $A$ is approximation complete and $\Gamma$ is initial. Moreover, since $A$ is a group, for each $y \in \Gamma$ the set $ \bR_y=\{r\in \bR: \omega^y\cdot r \in A\}$ is a subgroup of $\bR$. Furthermore, since $A$ is initial, it follows from Lemma \ref{squeeze}(i) that $\bR_y$ is itself initial. Now suppose $x, y \in \Gamma$ where $y \in R_{s(x)}$. In virtue of Lemma \ref{squeeze}(iv), $\frac{1}{2^n}\omega^y <_s \omega^{x}$ for each  $n \in \bN$. Therefore, $\frac{1}{2^n} \in \bR_y$ for each $n \in \bN$; and thus, since groups are closed under addition and subtraction, $\bD \subseteq \bR_y$. Accordingly, by appealing to the restriction to $A$ of the isomorphism specified in Proposition \ref{prop 5}, it is evident that 
$G=\{\sum_{\alpha < \beta} r_\alpha t^{y_\alpha}  \in \bR((t^\Gamma))_{\bf On}: \sum_{\alpha < \beta} \omega^{y_\alpha} \cdot r_\alpha \in A\}$ is a truncation complete, cross sectional subgroup of  $\bR((t^\Gamma))_{\bf On}$ having the requisite properties listed in the statement of the theorem, thereby establishing the ``only if" portion of the theorem.

Aspects of the ``if" portion of the proof borrow from the first author's proof of \cite[Theorem 18]{EH5}. However, whereas the latter proof concerns an ordered subfield $F$ of $\bR((t^\Gamma))_{\bf On}$, which is treated as an ordered vector space over the Archimedean ordered field $\{r \in\bR: rt^0 \in F\}$, here we are concerned with an ordered subgroup $G$ of $\bR((t^\Gamma))_{\bf On}$ that we may only assume to be an ordered $\bZ$-module, which complicates the argument. To keep the argument largely self-contained, however, we repeat with modifications portions of the earlier proof.

Let $G$ be a subgroup of $\bR((t^\Gamma))_{\bf On}$ satisfying the conditions specified in the statement of the theorem and let $A$ be the isomorphic copy of $G$ in {\bf No} that is the image of the restriction to $G$ of the inverse of the mapping referred to in Proposition \ref{prop 5}. That is, let $A=\{\sum_{\alpha < \beta} \omega^{y_\alpha} \cdot r_\alpha \in {\bf No}:     \sum_{\alpha < \beta} r_\alpha t^{y_\alpha} \in G \}$. To show that $A$ is an initial subgroup of \textbf{No}, it suffices to show that $\langle A, <_s | A \rangle$ is an initial subtree of $\langle \textbf{No}, <_s \rangle$. We do this by induction on $\Gamma$.

Let $a_0, \ldots, a_\alpha, \ldots ( \alpha < \beta)$ be a well-ordering of $\Gamma$ such that $\rho_{\textbf{No}}(a_\mu) \leq \rho_{\textbf{No}}(a_\nu)$ whenever $\mu < \nu < \beta$. We consider $A$ as an ordered $\bZ$-module. Let $A_\alpha$ be the submodule of $A$ containing $0$ as well as all of the elements in $A$ with exponents only from $\Gamma_\alpha = \{ a_\delta : \delta \leq \alpha \}$. We see that, considered as an ordered $\bZ$-module, $A=\bigcup_{\alpha < \beta} A_\alpha$. Notice also that since $a_0=0$, $A_0=\bR_0$, which, by condition (ii), is an initial subgroup of \textbf{No}. Therefore, $\langle A_0, <_s|A_0 \rangle$ is an initial subtree of \textbf{No}. To complete  the proof that $A$ is an initial subtree of \textbf{No}, it remains to show that for all $0<\alpha < \beta$, $\langle A_\alpha, <_s | A_\alpha \rangle$ is an initial subtree of \textbf{No}, if $\langle \bigcup_{\mu < \alpha} A_\mu, <_s | \bigcup_{\mu < \alpha} A_\mu\rangle$ is an initial subtree of \textbf{No}.

Accordingly, let $0<\alpha < \beta$ and suppose $\langle \bigcup_{\mu < \alpha} A_\mu, <_s | \bigcup_{\mu < \alpha} A_\mu\rangle$ is an initial subtree of \textbf{No}. Also, let $Z_\alpha =:$
\begin{equation*}
 \left\{ \sum_{\mu < \nu} \omega^{y_\mu} \cdot r_\mu \in A_\alpha - \bigcup_{\mu < \alpha} A_\mu : \textit{$\nu$ {\rm is an infinite limit ordinal and} $r_0 = 1$}\right\},
\end{equation*}
and let $b_0,\ldots,b_\sigma,\ldots (\sigma < \tau)$ be a well-ordering of  $\{ \omega^{a_\alpha} \cdot r_\alpha : r_\alpha \in \bR_{a_\alpha}\} \cup Z_\alpha$ such that for all $\gamma,\ \delta < \tau$:
\begin{enumerate}
\item if $b_\gamma$, $b_\delta \in \{ \omega^{a_\alpha} \cdot r_\alpha : r_\alpha \in \bR_{a_\alpha}\}$, then $b_\gamma < b_\delta$ only if $\rho_{\textbf{No}}(b_\gamma) \leq \rho_{\textbf{No}}(b_\delta)$;
\item if $b_\gamma \in \{ \omega^{a_\alpha} \cdot r_\alpha : r_\alpha \in \bR_{a_\alpha}\}$ and $b_\delta \in Z_\alpha$, then $b_\gamma < b_\delta$;
\item if $b_\gamma$, $b_\delta \in Z_\alpha$ and the initial sequence of ordinals over which the exponents in $b_\gamma$ are indexed is contained in the initial sequence of ordinals over which the exponents in $b_\delta$ are indexed, then $b_\gamma < b_\delta$.
 
\end{enumerate}

By appealing to Proposition \ref{modgengroup} (and recalling that $(X)_{A}$ denotes the ordered submodule of $A$ generated by $X$), it is easy to see that $\{ \omega^{a_\alpha} \cdot r_\alpha : r_\alpha \in \bR_{a_\alpha}\} \cup Z_{\alpha} \cup\bigcup_{\mu < \alpha} A_\mu$ is a class of generators for $A_\alpha$, and hence that, $A_\alpha = \bigcup_{\sigma < \tau} B_\sigma$, where $B_0 = \left( \{b_0\} \cup \bigcup_{\mu < \alpha} A_\mu \right)_A$ and $B_\sigma = \left( \{b_\sigma\} \cup \bigcup_{\delta < \sigma} B_\delta \right)_A$ for $0 < \sigma < \tau$. Thus to show that $A_\alpha$ is an initial subtree of \textbf{No}, it suffices to show that $B_\sigma$ is an initial subtree of \textbf{No} for each $\sigma < \tau$. Moreover, since $B_\sigma = \bigcup_{\delta < \sigma} B_\delta$, whenever $b_\sigma \in \bigcup_{\delta < \sigma} B_\delta$, henceforth we need only consider those $b_\sigma \notin \bigcup_{\delta < \sigma} B_\delta$.

First, note that $b_0 = \omega^{a_\alpha}$. Moreover, since $\Gamma$ is assumed to be initial, both $L_{s(a_\alpha)}$ and $R_{s(a_\alpha)} \subseteq \{a_\delta : \delta < \alpha \}$. Let $a^L$ and $a^R$ be typical elements of $L_{s(a_\alpha)}$ and $R_{s(a_\alpha)}$, respectively. It follows from Equation (1) (see \S3) that
\begin{equation*}
b_0 = \omega^{a_\alpha} = \left\{ 0, n\omega^{a^L} \, | \, \frac{1}{2^n}\omega^{a^R} \right\}
\end{equation*}
where $n$ ranges over the positive integers. As every element of $\bigcup_{\mu < \alpha} A_\mu - \{ 0 \}$ is Archimedean equivalent to a unique member of $\{\omega^{a_\delta} : \delta < \alpha \} \subseteq \bigcup_{\mu < \alpha} A_\mu - \{ 0 \}$, there is be a unique partition $(L_\alpha, R_\alpha)$ of $\bigcup_{\mu < \alpha} A_\mu$ where $L_\alpha < R_\alpha$, $\{ 0, n\omega^{a^L} \}$ is cofinal with $L_\alpha$ and $\{ \frac{1}{2^n}\omega^{a^R} \}$ is coinitial with $R_\alpha$. In virtue of the well ordering, each $n\omega^{a^L}$ is in $\bigcup_{\mu < \alpha} A_\mu$ as is each $\omega^{a^R}$. Plainly then, $\{n\omega^{a^L} \} \subseteq L_\alpha$. Moreover, in virtue of the well ordering and the fact that condition (iii) in the statement of the theorem requires $\bD \subseteq \bR_{a^R}$ for each $a^R$, it follows that $\{ \frac{1}{2^n}\omega^{a^R} \} \subseteq R_\alpha$. But then by Proposition \ref{P:Bc}, $b_0 = \{L_\alpha\, |\, R_\alpha \}$; and so, by Lemma \ref{thrm6anlg}, $B_0$ is an initial subtree of \textbf{No}.

If $\{ \omega^{a_\alpha} \cdot r_\alpha : r_\alpha \in \bR_{a_\alpha}-\bZ\}=\varnothing$, we turn to $Z_\alpha$. Otherwise, we take the first remaining $b_\sigma\in \{ \omega^{a_\alpha} \cdot r_\alpha : r_\alpha \in \bR_{a_\alpha}-\bZ\}$ for which $b_\sigma \notin \bigcup_{\delta < \sigma} B_\delta$ and observe that, in virtue of the nature of our well ordering,  $b_\sigma=\omega^{a_\alpha}\cdot r_\alpha$ for some $r_\alpha \in \bD - \bZ$ since every $r_\alpha \in \bR - \bD$ has predecessors in $\bD - \bZ$. Moreover, either $L_{s(a_\alpha)} = \varnothing$ or $L_{s(a_\alpha)} \neq \varnothing$. If $L_{s(a_\alpha)} = \varnothing$, then by the relevant portion of Lemma \ref{squeeze}(ii), we have 
\begin{equation*}
b_\sigma = \{ \omega^{a_\alpha}\cdot r_\alpha^L \, | \, \omega^{a_\alpha}\cdot r_\alpha^R\},
\end{equation*}
where $\omega^{a_\alpha}\cdot r_\alpha^L$, $\omega^{a_\alpha}\cdot r_\alpha^R <_s b_\sigma$; and this together with the nature of our well ordering implies $\omega^{a_\alpha}\cdot r_\alpha^L$, $\omega^{a_\alpha}\cdot r_\alpha^R \in \bigcup_{\delta < \sigma} B_\delta$. Furthermore, if $L_{s(a_\alpha)} \neq \varnothing$, then by Lemma \ref{squeeze}(iii), we have 
\begin{equation*}
b_\sigma = \{ \omega^{a_\alpha}\cdot r_\alpha^L+\omega^{a_\alpha^L}\cdot n \, | \,\omega^{a_\alpha}\cdot r_\alpha^R-\omega^{a_\alpha^L}\cdot n\},
\end{equation*}
where $ \omega^{y}\cdot r^L+\omega^{y^L}\cdot n\ <_s \omega^{y}\cdot r$ and $ \omega^{y}\cdot r^R-\omega^{y^L}\cdot n\ <_s \omega^{y}\cdot r$, for all  $y^L \in L_{s(y)}$, $r^L \in L_{s(r)}$, $r^R \in R_{s(r)}$ and $n \in\mathbb{N}$; and, again, this together with the nature of our well ordering implies that all those options are contained in $\bigcup_{\delta < \sigma} B_\delta$. Thus, in virtue of Proposition \ref{P:Bc} (iii), in both cases there is a partition $(L_\sigma', R_\sigma')$ of $\bigcup_{\delta < \sigma} B_\delta$ such that $b_\sigma = \{ L_\sigma'\, |\, R_\sigma' \}$; and so, by Lemma \ref{thrm6anlg}, $B_\sigma$ is an initial subtree of \textbf{No}. This portion of the proof is repeated until each $\omega^{a_\alpha}\cdot r_\alpha$ for some $r_\alpha \in \bD - \bZ$ has been dealt with.

Next, if $\{ \omega^{a_\alpha} \cdot r_\alpha : r_\alpha \in \bR_{a_\alpha}-\bD\}=\varnothing$, we turn to $Z_\alpha$. If not, we take the first remaining $b_\sigma\in \{ \omega^{a_\alpha} \cdot r_\alpha : r_\alpha \in \bR_{a_\alpha}-\bD\}$ for which $b_\sigma \notin \bigcup_{\delta < \sigma} B_\delta$ and argue exactly as we did in the case where $r_\alpha \in \bD - \bZ$ and $L_{s(a_\alpha)} = \varnothing$ except we appeal to the relevant portion of Lemma \ref{squeeze}(ii) concerned with members of $\bR - \bD$. This portion of the proof is repeated until each $\omega^{a_\alpha}\cdot r_\alpha$ for some $r_\alpha \in \bR - \bD$ has been dealt with.

Finally, if $Z_{\alpha }=\varnothing $, we are
finished; if not, let $0<\sigma <\tau $ and, as our induction hypothesis,
suppose $\bigcup_{\delta <\sigma }B_{\delta }$ is an initial subtree of $\bf No$. 
Since $0<\sigma <\tau $, $b_{\sigma }$ has a Conway name of the form $
\sum\nolimits_{\alpha <\,\pi }\omega ^{y_{\alpha }}.r_{\alpha }$, where $\pi 
$ is an infinite limit ordinal and $r_{0}=1$. Moreover, by part (ii) of Proposition \ref{thrm15ehr01}, $
b_{\sigma }=\left\{ L \, | \, R\right\}$ where 
\[
L=\left\{ \sum\nolimits_{\alpha <\,\mu }\omega ^{y_{\alpha }}.r_{\alpha
}+\omega ^{y_{\mu }}.\left( r_{\mu }-\frac{1}{2^{n}}\right) \right\}
_{0<n<\omega ,\mu <\pi } 
\]
and 
\[
R=\left\{ \sum\nolimits_{\alpha <\,\mu }\omega ^{y_{\alpha }}.r_{\alpha
}+\omega ^{y_{\mu }}.\left( r_{\mu }+\frac{1}{2^{n}}\right) \right\}
_{0<n<\omega ,\mu <\pi .} 
\]
But since, by construction, $L\cup R\subseteq \bigcup_{\delta <\sigma
}B_{\delta }$ and $b_{\sigma }\notin \bigcup_{\delta <\sigma }B_{\delta }$,
there is a partition $\left( L_{\sigma }^{\prime },R_{\sigma }^{\prime
}\right) $ of $\bigcup_{\delta <\sigma }$ $B_{\delta }$ such that 
\[
b_{\sigma }=\left\{ L_{\sigma }^{\prime } \, | \, R_{\sigma }^{\prime }\right\}
\text{.} 
\]
Therefore, by virtue of the induction hypothesis and Lemma \ref{thrm6anlg}, $B_{\sigma }$
is an initial subtree of ${\bf {No}}$. Thus, by induction, $B_{\sigma }$ is an
initial subtree of $\bf No$ for each $\sigma <\tau $, and so $A_{\alpha }$ and,
hence, $\left\langle A,<_{s}|A\right\rangle $ are initial subtrees of $\bf No$;
thereby proving the theorem.
\end{proof}

\begin{remark*}

Using the axiom of choice or global choice (if the class is a proper class), it is a routine matter to prove that every ordered class is isomorphic to an initial ordered subclass of {\bf {No}}.
This might seem to suggest that in the statement of Theorem \ref{initgroups} one may omit the assumption that $\Gamma$ is isomorphic to an initial ordered subclass of {\bf No}. However, the presence of condition (iii) precludes the omission of that statement.

\end{remark*}

%------------------------------------------------
\subsection{Densely and discretely ordered initial subgroups of {\bf No}}
%------------------------------------------------

A nontrivial ordered group $\langle G, <, +, 0\rangle$ is said to be \textit{discrete or discretely ordered} if it contains a least positive member, and it is said to be \textit{dense or densely ordered} if for all $a,b\in G$ where $a<b$ there is a $c\in G$ such that $a<c<b$. A nontrivial ordered group $\langle G, <, +, 0\rangle$ is dense if and only if it is not discrete.

The following proposition provides a simple means of distinguishing between nontrivial initial subgroups of {\bf No} that are discrete and those that are dense.

\begin{proposition}
\label{grouptypes}

An initial subgroup $G$ of {\bf No} is discrete if and only if there is a member of $G$ of the form $\frac{1}{2^n}\omega^{-\alpha}$ (where $n \in \bN$ and $\alpha \in On$) having no left immediate successor in $G$.
\end{proposition}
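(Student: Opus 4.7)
The plan is to exploit the characterization of initial subgroups of ${\bf No}$ from Theorem \ref{initgroups}: $G$ corresponds to a truncation-closed, cross-sectional subgroup of a Hahn-style series group $\bR((t^\Gamma))_{\bf On}$ with $\Gamma$ initial in ${\bf No}$, each $\bR_y$ initial in $\bR$ (so classified by Lemma \ref{initarchgroups}), and the side condition $\bD \subseteq \bR_y$ whenever some $x \in \Gamma$ has $y$ above it as a tree-predecessor. The other key tool is Lemma \ref{squeeze}(ii), which represents $\omega^{-\alpha}/2^{n+1}$ as the cut $\{0 \, | \, \omega^{-\alpha}/2^n\}$ and thereby identifies it as the left immediate successor of $\omega^{-\alpha}/2^n$ in ${\bf No}$.

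For the forward direction, let $\epsilon$ be the least positive element of a discrete initial subgroup $G$. First, a truncation argument forces the Conway name of $\epsilon$ to have exactly one term $\omega^{y_0}\cdot r_0$: any proper truncation $\omega^{y_0}\cdot r_0$ lies in $G$ by Theorem \ref{initgroups}, and comparing it with $\epsilon$ yields either a smaller positive element of $G$ (if the next coefficient is positive) or, after subtracting from $\epsilon$, a positive element in a strictly smaller Archimedean class (if the next coefficient is negative), in either case contradicting minimality. Next, Lemma \ref{initarchgroups} gives two possibilities for the nontrivial initial subgroup $\bR_{y_0} \subseteq \bR$; the case $\bR_{y_0} \supseteq \bD$ is ruled out because it produces arbitrarily small positive elements $\omega^{y_0}/2^k \in G$, so $\bR_{y_0} = \{z/2^n : z \in \bZ\}$ and minimality pins down $r_0 = 1/2^n$. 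Then $y_0$ must be the minimum of $\Gamma$ (otherwise a smaller exponent $y$ would give $\omega^y \in G$ in a smaller Archimedean class, hence a smaller positive element), so using that $\Gamma$ is an initial subtree we conclude $L_{s(y_0)} = \varnothing$, meaning $y_0 = -\alpha$ for some $\alpha \in {\bf On}$. Finally, the left immediate successor $\epsilon^-$ in ${\bf No}$ has the same leading sign as $\epsilon$ and hence is positive; since $\epsilon^- < \epsilon$, minimality of $\epsilon$ forces $\epsilon^- \notin G$.

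For the converse, suppose $\epsilon = \omega^{-\alpha}/2^n \in G$ has no left immediate successor in $G$. Using Lemma \ref{squeeze}(ii) with $y = -\alpha$ (so $L_{s(y)} = \varnothing$) and $r = 1/2^{n+1}$, I identify the left immediate successor of $\epsilon$ in ${\bf No}$ as $\omega^{-\alpha}/2^{n+1}$, so the hypothesis forces $1/2^{n+1} \notin \bR_{-\alpha}$; by Lemma \ref{initarchgroups} this pins down $\bR_{-\alpha} = \{z/2^n : z \in \bZ\}$. The crux is then to show $-\alpha = \min \Gamma$: any $y \in {\bf No}$ with $y < -\alpha$ has a sign expansion obtained by appending further minuses to that of $-\alpha$, so $-\alpha <_s y$ and $-\alpha \in R_{s(y)}$; if such a $y$ lay in $\Gamma$, condition (iii) of Theorem \ref{initgroups} would force $\bD \subseteq \bR_{-\alpha}$, contradicting its established form. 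With $-\alpha = \min \Gamma$ in hand, any positive $g \in G$ with leading term $r_0 \omega^{y_0}$ either has $y_0 > -\alpha$, in which case $g$ is Archimedean-larger than $\omega^{-\alpha}$ and $g > \omega^{-\alpha} \geq \epsilon$, or has $y_0 = -\alpha$, in which case the minimality of $-\alpha$ leaves no room for lower-order terms and $g = r_0 \omega^{-\alpha}$ with $r_0 \in \bR_{-\alpha}$ positive, giving $r_0 \geq 1/2^n$ and hence $g \geq \epsilon$. Thus $\epsilon$ is the least positive element of $G$.

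The main obstacle I expect is the step in the converse that promotes the local hypothesis ``$\epsilon^- \notin G$'' into the global conclusion ``$-\alpha = \min \Gamma$''. This rests on two observations that must be combined carefully: the sign-expansion fact that every element of ${\bf No}$ below $-\alpha$ has $-\alpha$ as a tree-predecessor, together with condition (iii) of Theorem \ref{initgroups}, which uses precisely this tree-predecessor relationship to force $\bD \subseteq \bR_{-\alpha}$ and thereby rule out any exponent below $-\alpha$ in $\Gamma$.
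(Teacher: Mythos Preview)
Your argument is correct, but it takes a substantially different route from the paper's. The paper's proof never touches Theorem~\ref{initgroups}, Lemma~\ref{initarchgroups}, or Conway normal forms at all. Instead it begins with the single observation that the elements $\frac{1}{2^n}\omega^{-\alpha}$ are \emph{exactly} the surreal numbers whose sole left tree-predecessor is $0$, i.e.\ those $g$ with $L_{s(g)}=\{0\}$. The proposition then reduces to: $g$ is the least positive element of $G$ iff $L_{s(g)}=\{0\}$ and $g$ has no left immediate successor in $G$. Both directions are handled directly from the lexicographic-tree axioms: if $g$ is least positive, any extra element of $L_{s(g)}$ or any left immediate successor would be a smaller positive element; conversely, if $0<a<g$ with $L_{s(g)}=\{0\}$, then $a$ and $g$ are $<_s$-incomparable (using initiality and the missing left successor to rule out $g<_s a$), so the lexicographic property forces a common predecessor strictly between them, which must be $0$---contradiction.

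Your Hahn-series approach is longer but yields more: you explicitly identify the least positive element as $\frac{1}{2^n}\omega^{-\alpha}$ with $-\alpha=\min\Gamma$ and $\bR_{-\alpha}=\{z/2^n:z\in\bZ\}$, and your converse is really showing that the local hypothesis ``$\epsilon$ has no left immediate successor'' forces $-\alpha=\min\Gamma$ and determines $\bR_{-\alpha}$ globally via condition~(iii) of Theorem~\ref{initgroups}. That is genuinely more informative than the paper's proof. Two small remarks: your phrase ``appending further minuses'' is a slight overstatement (any $y<-\alpha$ extends $-\alpha$'s sign expansion by a minus \emph{followed by arbitrary signs}), but your formal conclusion $-\alpha\in R_{s(y)}$ is exactly right; and your identification of $\omega^{-\alpha}/2^{n+1}$ as the left immediate successor of $\omega^{-\alpha}/2^n$ deserves one more line---Lemma~\ref{squeeze}(ii) gives $\omega^{-\alpha}/2^{n+1}=\{0\,|\,\omega^{-\alpha}/2^k:k\le n\}$, so $L_s=\{0\}$ and $\omega^{-\alpha}/2^n=\min R_s$, whence it is the immediate predecessor.
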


\begin{proof}
Note that elements of \textbf{No} of the form $\frac{1}{2^n}\omega^{-\alpha}$ are precisely those having 0 as their sole left predecessor. We must show that for any $g \in G$, $g$ is the least positive element of $G$ if and only if $L_{s(g)}=\{0\}$ and $g$ has no left immediate successor.

First, suppose there is a least positive $g \in G$ for which $L_{s(g)}\neq \{0\}$. If $0\notin L_{s(g)}$, then $g$ is not positive, a contradiction. If there is an $a\in L_{s(g)}$ where $a\neq 0$, then $a$ is a positive element less than $g$, another contradiction.

Next, notice that any least positive $g \in G$ must have no immediate left successor, since, if $g$ has a left immediate successor, this successor must be a positive element less than $g$.

To show the other direction, suppose $G$ is initial. Further suppose $g$ has no left immediate successor, $L_{s(g)}=\{0\}$ and there is an $a\in G$ where $0<a<g$. Since $G$ is lexicographically ordered, it follows that for any $x,y \in G$ where $x<y$, $x$ is incomparable with $y$ if and only if $x$ and $y$ have a common predecessor $z$ such that $x<z<y$ (see \S2). Clearly, $a$ must be incomparable with $g$, as the only surreal less than $g$ and comparable with $g$ is zero. Therefore, they must have a common predecessor $z$ such that $a<z<g$, but by the assumption that $L_{s(g)}=\{0\}$, $z$ must equal 0 and $a$ must be negative, which is impossible.
\end{proof}

%------------------------------------------------
\section{Initial subdomains of \textbf{No}}
\label{sec:domains}
%------------------------------------------------
We now turn to the characterization of initial subdomains of $\bf {No}$ beginning with the initial subdomains of $\bR$.

\begin{lemma}
\label{archdom}
The initial subdomains of $\bR$ are $\bZ$ and the subdomains of $\bR$ containing $\bD$. Every initial subdomain of $\bf {No}$ is an extension of an initial subdomain of $\bR$.

\end{lemma}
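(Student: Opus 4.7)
The plan is to reduce both claims to Lemma \ref{initarchgroups} together with the observation that $\mathbb{R}$ is itself an initial subtree of $\mathbf{No}$. The first claim is a matter of intersecting the list of initial subgroups of $\mathbb{R}$ with the multiplicative closure requirement, and the second claim falls out from the fact that intersecting an initial subtree of $\mathbf{No}$ with $\mathbb{R}$ yields a class that is initial in $\mathbb{R}$.

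For the first claim, I would first note that $\mathbb{R}$ is an initial subtree of $\mathbf{No}$: every element of $\mathbb{D}$ has finite tree-rank, whose predecessors are again of lower finite tree-rank and hence in $\mathbb{D}$; and every $x \in \mathbb{R} - \mathbb{D}$ has tree-rank $\omega$ by construction, so its predecessors are dyadic, hence in $\mathbb{D}\subseteq \mathbb{R}$. Now let $A$ be an initial subdomain of $\mathbb{R}$. Since $1 \in A$, $A$ is a nontrivial initial subgroup containing $\mathbb{Z}$, so by Lemma \ref{initarchgroups} either $\mathbb{D} \subseteq A$, or $A = \{z/2^m : z \in \mathbb{Z}\}$ for some $m \in \mathbb{N}$. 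In the latter case, if $m \geq 1$ then $(1/2^m)(1/2^m) = 1/2^{2m} \notin A$, contradicting closure under multiplication; so $m = 0$, and $A = \mathbb{Z}$. Conversely, $\mathbb{Z}$ is a subdomain of $\mathbb{R}$ and is initial (the predecessors in $\mathbf{No}$ of any integer $n$ are the integers strictly between $0$ and $n$, inclusive of $0$). Any subdomain $A$ of $\mathbb{R}$ containing $\mathbb{D}$ is also initial: the predecessors in $\mathbb{R}$ of any $x \in A$ all lie in $\mathbb{D} \subseteq A$, by the tree-rank observation above.

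For the second claim, let $D$ be an initial subdomain of $\mathbf{No}$, and consider $D \cap \mathbb{R}$. This is a subdomain of $\mathbb{R}$ (as the intersection of two subdomains of $\mathbf{No}$, one of which is $\mathbb{R}$), and it contains $D \cap \mathbb{R}$'s predecessors in $\mathbb{R}$, since for $x \in D \cap \mathbb{R}$ one has $pr_{\mathbb{R}}(x) = pr_{\mathbf{No}}(x) \subseteq D$ (using that both $\mathbb{R}$ and $D$ are initial in $\mathbf{No}$); hence $pr_{\mathbb{R}}(x) \subseteq D \cap \mathbb{R}$. So $D \cap \mathbb{R}$ is an initial subdomain of $\mathbb{R}$, and $D$ extends it, as required.

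No serious obstacle is anticipated; the only place where care is needed is the verification that $\mathbb{R}$ is initial in $\mathbf{No}$, which is immediate from the construction of $\mathbb{R}$ given in \S 3 and the identification of the finite-tree-rank elements of $\mathbf{No}$ with $\mathbb{D}$.
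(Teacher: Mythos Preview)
Your proof is correct. The only notable difference from the paper's argument is in the forward direction of the first claim: the paper argues directly that an initial subdomain $K \neq \mathbb{Z}$ of $\mathbb{R}$ must contain some element of the form $z + \tfrac{1}{2}$ (by initiality and the tree structure of $\mathbb{D}$ from Proposition~\ref{P:Cb}), hence $\tfrac{1}{2}$, hence all of $\mathbb{D} = \mathbb{Z}[\tfrac{1}{2}]$; you instead invoke Lemma~\ref{initarchgroups} to list the possible initial subgroups and then use $(1/2^m)^2 = 1/2^{2m}$ to eliminate the cases $m \geq 1$. Both routes are short; yours has the advantage of reusing the group classification already established, while the paper's is self-contained and foregrounds the ring-theoretic fact that $\mathbb{D}$ is generated by $\tfrac{1}{2}$. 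For the second claim the paper simply says it is trivial; your explicit argument via $D \cap \mathbb{R}$ is the natural way to unpack that.
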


\begin{proof}
Let $K$ be a subdomain of $\bR$.  If $K=\bZ$, then by Proposition \ref{P:Cb} $K$ is initial.  Now suppose $K \neq \bZ$. If $K$ is initial, then $\bZ\subset K$ and, by Proposition \ref{P:Cb}, $K$ must contain some element of the form $z+\frac{1}{2}$ where $z \in \bZ$. By subtracting $z$, we see that $\frac{1}{2} \in K$. But since the domain $\bD$ is generated by $\frac{1}{2}$, $\bD\subseteq K$. Finally, if $\bD\subseteq K$, then $K$ is initial since every every predecessor of a member of $\bR-\bD$ is a member of $\bD$, and $\bD$ is initial. The second part of the lemma is trivial.
\end{proof}

\begin{theorem}
\label{initdoms}
An ordered domain is isomorphic to an initial subdomain of {\bf No} if and only if
it is isomorphic to a truncation closed, cross sectional subdomain $K$ of a power series domain $\bR((t^\Gamma))_{\bf On}$, where $\Gamma$ is isomorphic to an initial submonoid of {\bf No}, every $y$-coefficient domain $\bR_y$ of $K$ is an initial ordered subdomain of $\bR$, and $\bD \subseteq \bR_y$
for any $x, y \in \Gamma$ where $y \in R_{s(x)}$.
\end{theorem}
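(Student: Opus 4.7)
My plan is to follow the proof of Theorem \ref{initgroups} very closely, adjusting only for the additional multiplicative structure. The core framework, including the transfinite induction and the cut-by-cut verification, transfers with essentially no change; the genuinely new content lives in the ``only if'' direction, where we must upgrade from subgroup facts to submonoid and subdomain facts.

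For the ``only if'' direction, given an initial subdomain $A$ of $\textbf{No}$, I would set $\Gamma = \{y : \omega^y \in A\}$ and let $K$ be the image of $A$ under the isomorphism of Proposition \ref{prop 5}. The proof of Theorem \ref{initgroups} already delivers that $\Gamma$ is an initial ordered subclass of $\textbf{No}$, that each $\bR_y$ is an initial subgroup of $\bR$, that $\bD \subseteq \bR_y$ whenever $y \in R_{s(x)}$ for some $x \in \Gamma$ (via Lemma \ref{squeeze}(iv)), and that $K$ is truncation closed and cross-sectional. The two new ingredients to verify are: (a) $\Gamma$ is a submonoid of $\textbf{No}$, since $\omega^0 = 1 \in A$ and $\omega^{y_1} \cdot \omega^{y_2} = \omega^{y_1 + y_2} \in A$ for $y_1, y_2 \in \Gamma$; and (b) each $\bR_y$ is an initial subdomain of $\bR$, which combines its being an initial subgroup with closure under multiplication coming from the subdomain structure of $K$ (using cross-sectionality), then classifying via Lemma \ref{archdom}.

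For the ``if'' direction, let $A$ be the isomorphic preimage of $K$ in $\textbf{No}$, which is a subdomain by Proposition \ref{prop 5}. It suffices to show that $\langle A, <_s\mid A\rangle$ is an initial subtree of $\textbf{No}$. Here I would use that Proposition \ref{modgengroup} applies verbatim (every subdomain is a subgroup), so $\{r t^y : y \in \Gamma,\ r \in \bR_y\} \cup Z$ generates $K$, and hence $A$, as a $\bZ$-module. The transfinite induction on $\Gamma$ then proceeds exactly as in the proof of Theorem \ref{initgroups}: well-order $\Gamma$ as $\{a_\alpha\}_{\alpha < \beta}$ by tree-rank, show by induction on $\alpha$ that each $A_\alpha$ is an initial subtree, and within each stage run the same sub-induction on the well-ordered generating set $\{\omega^{a_\alpha}\cdot r : r \in \bR_{a_\alpha}\} \cup Z_\alpha$. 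The cut computations via Lemma \ref{squeeze}(ii),(iii), Proposition \ref{thrm15ehr01}(iii), and Lemma \ref{thrm6anlg} transfer without modification, since they depend only on module structure.

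The hard part will be the ``only if'' direction's upgrade of $\bR_y$ from a mere initial subgroup to an initial subdomain of $\bR$; this is where the subdomain hypothesis on $A$ must be leveraged together with cross-sectionality and initiality to force the multiplicative closure of the coefficient rings. By contrast, the ``if'' direction, which one might expect to require new multiplicative bookkeeping, requires no genuinely new combinatorial input beyond Theorem \ref{initgroups}, because the proof only needs $A$ to be an initial subtree and the relevant generating-set and cut arguments are indifferent to whether $A$ happens to be closed under multiplication.
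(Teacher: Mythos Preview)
Your overall strategy is the paper's: both directions reduce to Theorem~\ref{initgroups}, with only the extra monoid check on $\Gamma$. The paper's proof is in fact a single short paragraph---it asserts that the stated conditions are ``precisely the conditions for initial groups, with the exception of the stipulation that $\Gamma$ must form a monoid,'' verifies the monoid condition via $t^x\cdot t^y=t^{x+y}$, and then for sufficiency treats $K$ as a $\bZ$-module and reuses the second half of the proof of Theorem~\ref{initgroups} verbatim. Your ``if'' direction and your point~(a) match this exactly.

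Where you diverge is your step~(b). The paper does \emph{not} separately argue that each $\bR_y$ is an initial sub\emph{domain} of $\bR$; it simply identifies the coefficient condition here with the group condition, relying on its earlier blanket assertion that for a subdomain $K$ the set $\bR_y$ is automatically a domain. Your proposed argument for~(b) does not work: from $r,s\in\bR_y$ you only get $rst^{2y}\in K$, i.e.\ $rs\in\bR_{2y}$, not $rs\in\bR_y$; cross-sectionality gives $\bR_0\subseteq\bR_y$, but not multiplicative closure of $\bR_y$. Concretely, with $\Gamma=\bN$ (an initial submonoid of $\textbf{No}$, condition~(iii) vacuous) and $K=\bZ[t,\tfrac{1}{2}t]$, one has $\bR_n=\tfrac{1}{2^n}\bZ$, all initial subgroups of $\bR$, so Theorem~\ref{initgroups} makes the image $A\subseteq\textbf{No}$ an initial subdomain; yet $\bR_1=\tfrac{1}{2}\bZ$ is not a subdomain of $\bR$. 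So the ``hard part'' you isolate is not derivable by the route you sketch, and the paper does not attempt it---it treats the subdomain and subgroup coefficient conditions as interchangeable.
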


\begin{proof}
First note that the above conditions are precisely the conditions for initial groups, with the exception of the stipulation that $\Gamma$ must be form a monoid. The latter condition is necessary since, if the image of the domain $K$ is cross sectional, then for all $x, y \in \Gamma$, $t^x$ and $t^y$ are in $K$, so $t^x\cdot t^y = t^{x+y}$ is in $K$ and $x+y$ is in $\Gamma$. Of course, the rest of the conditions are necessary for integral domains as well, as they are necessary for abelian groups. In order to show that the conditions are also sufficient, we may treat $K$ as a $\bZ$-module and repeat the second part of the proof of Theorem \ref{initgroups}. 
\end{proof}

%------------------------------------------------
\subsection{Densely ordered initial subdomains of \textbf{No}}
%------------------------------------------------

An ordered domain is said to be \textit{dense} if its ordered additive group is dense.

\begin{corollary}
\label{initdensedoms}
A densely ordered domain is isomorphic to an initial subdomain of {\bf No} if and only if it is isomorphic to a truncation closed, cross sectional subdomain $K$ of a power series domain $\bR((t^\Gamma))_{\bf On}$ where $\Gamma$ is isomorphic to an initial submonoid of {\bf No} and $\bD$ is an initial subdomain of $K$.
\end{corollary}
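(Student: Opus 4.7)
The plan is to obtain the corollary as a direct consequence of Theorem \ref{initdoms}, by showing that in the presence of density the single hypothesis ``$\bD$ is an initial subdomain of $K$'' absorbs both the coefficient-group condition (ii) and the dyadic-closure condition (iii) of that theorem.

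For the forward direction, assume $K$ is densely ordered and identify it (up to isomorphism) with an initial subdomain of $\bf No$. Applied to $0<1$, density yields some $a\in K$ with $0<a<1$. The simplest surreal in the open interval $(0,1)$ is $1/2=\{0\,|\,1\}$, so either $a=1/2$ or $1/2\in pr_{\bf No}(a)$; since $K$ is initial, in both cases $1/2\in K$. Because $K$ is a subdomain, $\bZ\subseteq K$, and then closure under sum and product together with $1/2\in K$ gives $\bD\subseteq K$. Since $\bD$ and $K$ are each initial in $\bf No$, transitivity of the ``initial subtree'' relation delivers $\bD$ as an initial subdomain of $K$. The remaining structural claims (that $K$ is isomorphic to a truncation closed, cross sectional subdomain of some $\bR((t^\Gamma))_{\bf On}$ with $\Gamma$ an initial submonoid of $\bf No$) are supplied by Theorem \ref{initdoms}.

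For the reverse direction, suppose $K$ is a truncation closed, cross sectional subdomain of $\bR((t^\Gamma))_{\bf On}$, with $\Gamma$ isomorphic to an initial submonoid of $\bf No$ and $\bD$ an initial subdomain of $K$. I verify the three hypotheses of Theorem \ref{initdoms}. Condition (i) is given. For (ii) and (iii), note that for every $y\in\Gamma$ cross sectionality gives $t^y\in K$, whence $d\,t^y\in K$ for each $d\in\bD$, i.e.\ $\bD\subseteq\bR_y$. Lemma \ref{initarchgroups} (together with Lemma \ref{archdom} when $y=0$) then forces each $\bR_y$ to be an initial subgroup of $\bR$, establishing (ii); and (iii) is immediate since $\bD\subseteq\bR_y$ for \emph{every} $y\in\Gamma$, not merely those in some $R_{s(x)}$. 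Theorem \ref{initdoms} now yields an isomorphism of $K$ with an initial subdomain of $\bf No$. Density of $K$ follows at once from $\bD\subseteq K$, since $\{1/2^n:n\in\bN\}$ supplies a strictly decreasing sequence of positive elements of $K$ with no least positive member.

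The one substantive step is the forward direction, where the abstract density hypothesis must be turned into the concrete containment $\bD\subseteq K$. The tree-theoretic observation that $1/2$ is the simplest surreal in $(0,1)$ converts a single density witness into $1/2\in K$, and the subdomain axioms do the rest. An alternative route through a case analysis on the leading exponent of a witness inside $\bR((t^\Gamma))_{\bf On}$ (using condition (iii) of Theorem \ref{initdoms} to show that $\bR_0=\bZ$ would make $1$ the least positive element of $K$) would also work, but is noticeably more laborious.
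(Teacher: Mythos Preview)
Your proof is correct and follows essentially the same approach as the paper: both directions hinge on (a) the tree-theoretic observation that density together with initiality forces $\tfrac{1}{2}=\{0\,|\,1\}\in K$ and hence $\bD\subseteq K$, and (b) cross-sectionality pushing $\bD$ from $\bR_0$ into every $\bR_y$ to secure the coefficient conditions of Theorem~\ref{initdoms}. The paper packages the argument as a single equivalence between ``$\bD$ is an initial subdomain of $K$'' and conditions (i)--(ii) of Theorem~\ref{initdoms}, whereas you prove the two implications of the corollary separately and cite Lemma~\ref{initarchgroups} rather than Lemma~\ref{archdom} for the initiality of $\bR_y$, but the substance is the same.
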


\begin{proof}
Let $K$ be a dense initial subdomain of {\bf No}. In light of Theorem \ref{initdoms}, to prove the corollary it suffices to show that $\bD$ is an initial subdomain of $K$ if and only if (i) every $y$-coefficient domain $\bR_y$ of $K$ is an initial ordered subdomain of $\bR$, and (ii) $\bD \subseteq \bR_y$
for any $x, y \in \Gamma$ where $y \in R_{s(x)}$. Suppose $\bD$ is an initial subdomain of $K$. Then $\bD \subseteq \bR_0$. Moreover, since $K$ is cross sectional, $\bR_0 \subseteq \bR_y$ for all $y \in \Gamma$, since $t^y \in K$ and $r\cdot t^0 \in K$ for all $r \in \bR_0$, which implies $r\cdot t^y \in K$ for all $r \in \bR_0$. This implies (ii) is satisfied, which along with Lemma \ref{archdom} implies (i) is satisfied as well. Now suppose (i) and (ii) are the case. If $\bD$ is not an initial subdomain of $K$, then by (i) and Lemma \ref{archdom}, $\bR_0=\bZ$. Moreover, since $K$ is both dense and initial, the simplest member of {\bf No} lying between $0$ and $1$, namely $\frac{1}{2}$, is in $K$. But this implies $\bD$ is an initial subdomain of $K$, contrary to assumption. 
\end{proof}

The following result, which is a special case of Corollary \ref{initdensedoms}, is the aforementioned result (see \S 1) categorizing the initial subfields of ${\bf No}$ established in \cite{EH5}. Since the special case is about ordered fields, the ordered monoid $\Gamma$ must be an ordered abelian group and the reference to $\bD$ may be deleted since every ordered field is an extension of an isomorphic copy of $\bD$, the latter of which is initial in {\bf No}.

\begin{corollary}
\label{initfields}
An ordered field is isomorphic to an initial subfield of {\bf No} if and only if it is isomorphic to a truncation closed, cross sectional subfield of a power series field $\bR((t^\Gamma))_{\bf On}$ where $\Gamma$ is isomorphic to an initial subgroup of {\bf No}.

\end{corollary}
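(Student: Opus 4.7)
The plan is to deduce Corollary \ref{initfields} directly from Corollary \ref{initdensedoms}, since every ordered field is a densely ordered domain. Only two adjustments relative to that corollary are required: (1) strengthening ``initial submonoid'' to ``initial subgroup'' for $\Gamma$, and (2) showing that the clause ``$\bD$ is an initial subdomain of $K$'' is automatic when $K$ is a subfield. The main obstacle is (1), i.e.\ forcing $\Gamma$ to be closed under negation; (2) is essentially bookkeeping.

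For the ``only if'' direction, suppose $F$ is an initial subfield of ${\bf No}$. Then $F$ is a densely ordered initial subdomain (the arithmetic mean of any two distinct elements lies strictly between them), so Corollary \ref{initdensedoms} supplies an isomorphism $F \cong K$ onto a truncation closed, cross sectional subdomain $K$ of $\bR((t^\Gamma))_{\bf On}$, with $\Gamma$ an initial submonoid of ${\bf No}$ and $\bD$ initial in $K$. To upgrade $\Gamma$ to a subgroup, fix $y \in \Gamma$. Cross sectionality gives $t^y \in K$; since $K$ is a field, $(t^y)^{-1} \in K \subseteq \bR((t^\Gamma))_{\bf On}$, so it has a Hahn representation $\sum_{\alpha} r_\alpha t^{x_\alpha}$ with each $x_\alpha \in \Gamma$. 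Multiplying through by $t^y$ yields
\[
\sum_{\alpha} r_\alpha t^{y+x_\alpha} \;=\; 1 \;=\; 1\cdot t^0,
\]
and by uniqueness of Hahn representations the sum collapses to a single term $1\cdot t^0$, forcing $-y = x_0 \in \Gamma$. Hence $\Gamma$ is closed under negation and is therefore a subgroup of ${\bf No}$.

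For the ``if'' direction, suppose $F$ is isomorphic to a truncation closed, cross sectional subfield $K \subseteq \bR((t^\Gamma))_{\bf On}$ with $\Gamma$ an initial subgroup of ${\bf No}$. Then $\Gamma$ is, a fortiori, an initial submonoid. Moreover $K$, being a field containing $1 = t^0$, contains $\bQ$ and therefore $\bD$, embedded via $\bD \subseteq \bR = \bR\cdot t^0 \subseteq K$. Since $\bD$ consists of the surreals of finite tree-rank, every $<_s$-predecessor of a member of $\bD$ lies in $\bD$, so $\bD$ is initial in ${\bf No}$; initiality in ${\bf No}$ then passes to initiality inside $K$ once $K$ is identified with its image in ${\bf No}$ via Proposition \ref{prop 5}. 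All the hypotheses of Corollary \ref{initdensedoms} are therefore satisfied, and $F$ is isomorphic to an initial subfield of ${\bf No}$. As noted, the only nonbureaucratic step in the whole argument is the Hahn-uniqueness computation producing $-y \in \Gamma$.
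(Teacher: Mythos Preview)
Your proof is correct and follows essentially the same approach as the paper: both deduce the result as a special case of Corollary~\ref{initdensedoms}, noting that for fields the monoid $\Gamma$ must be a group and the $\bD$-condition is automatic. You supply more detail than the paper does (in particular, the explicit Hahn-uniqueness computation showing $-y \in \Gamma$), but the underlying argument is the same.
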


%------------------------------------------------
\subsection{ Discretely ordered initial subdomains of \textbf{No}}
%------------------------------------------------
An ordered domain is said to be \textit{discrete} if its additive group is discrete. Accordingly, an ordered domain is discrete if and only if it is not dense. The least positive member of an ordered domain is its multiplicative identity 1.  

A discrete subdomain $A$ of an ordered field $B$ is said to be an \emph{integer part} if every member of $B$ is at most a distance 1 from a member of $A$. Conway introduced a canonical integer part \textbf{Oz} of \textbf{No} 
consisting of the surreal numbers of the form $x = \{x-1\, |\, x+1\}.$ These \emph{omnific integers}, as Conway calls them, are precisely the surreal numbers having Conway names of the form $\sum_{\alpha < \beta} \omega^{y_{\alpha}} \cdot r_{\alpha}$ where $y_{\alpha} \geq 0$  for all $\alpha < \beta$, and $r_{\alpha}$ is an integer if $y_{\alpha} = 0$.

As we will now see, \textbf{Oz} is in fact (up to isomorphism) the unique discrete s-hierarchical ordered domain containing an initial isomorphic copy of every discrete s-hierarchical ordered domain. 

\begin{theorem}
\label{omnificdiscrete}
An initial subdomain of {\bf No} is discrete if and only if it is an initial subdomain of {\bf Oz}.
\end{theorem}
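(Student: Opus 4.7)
My plan is to derive both directions from Theorem \ref{initdoms}, with the preliminary observation that $\mathbf{Oz}$ itself is an initial subdomain of $\mathbf{No}$. To establish this, I would apply Theorem \ref{initdoms} to $\mathbf{Oz}$'s Hahn-series presentation, namely $\Gamma = \{y \in \mathbf{No} : y \geq 0\}$ with coefficient domain $\mathbb{R}_0 = \mathbb{Z}$ at the origin and $\mathbb{R}_y = \mathbb{R}$ otherwise, and verify the three required conditions: $\Gamma$ is an initial submonoid, since the subtree rooted at any negative $z$ lies strictly below $\min R_{s(z)} \leq 0$ (because $0 \in R_{s(z)}$ for $z < 0$), so no nonnegative surreal can descend from a negative one; each $\mathbb{R}_y$ is initial in $\mathbb{R}$ by Lemma \ref{archdom}; and the inclusion $\mathbb{D} \subseteq \mathbb{R}_y$ is vacuous at $y = 0$ (since $0 \in R_{s(x)}$ for some $x \in \Gamma$ would force $x < 0$) and trivial for $y > 0$. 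Given this, the $(\Leftarrow)$ direction is immediate: an initial subdomain $A$ of $\mathbf{Oz}$ is automatically initial in $\mathbf{No}$ (the $\mathbf{No}$-predecessors of $x \in A \subseteq \mathbf{Oz}$ already lie in $\mathbf{Oz}$ by the initiality of $\mathbf{Oz}$, and hence coincide with the $\mathbf{Oz}$-predecessors that $A$ is known to contain), and $A$ inherits discreteness from $\mathbf{Oz}$, whose least positive element is $1 \in A$.

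For the $(\Rightarrow)$ direction, I would take a discrete initial subdomain $A$ of $\mathbf{No}$ and use Proposition \ref{prop 5} together with Theorem \ref{initdoms} to present $A$ as a truncation closed, cross sectional subdomain $K \subseteq \mathbb{R}((t^\Gamma))_{\mathbf{On}}$ satisfying the three conditions. Two structural consequences of discreteness would drive the argument. First, $\Gamma$ contains no negative element: a negative $y \in \Gamma$ would, via cross sectionality, place the positive infinitesimal $\omega^y$ strictly between $0$ and $1$ in $A$, contradicting discreteness. Second, $\mathbb{R}_0 = \mathbb{Z}$: since $1 \in A$, $\mathbb{R}_0$ is a nontrivial initial subdomain of $\mathbb{R}$, and Lemma \ref{archdom}'s only alternative to $\mathbb{Z}$ is a superset of $\mathbb{D}$, which would put $1/2 \in A$, again violating discreteness. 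These two facts assert exactly that every Conway name $\sum \omega^{y_\alpha} \cdot r_\alpha$ appearing in $A$ has $y_\alpha \geq 0$ for all $\alpha$ and $r_\alpha \in \mathbb{Z}$ whenever $y_\alpha = 0$, which is precisely the characterization of omnific integers recalled just before the theorem. Hence $A \subseteq \mathbf{Oz}$, and combined with the initiality of $\mathbf{Oz}$ in $\mathbf{No}$ already established, $A$ is an initial subdomain of $\mathbf{Oz}$.

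The main obstacle is translational rather than computational: one has to recognize that the algebraic condition of discreteness corresponds exactly to the two Hahn-series conditions (nonnegative exponents and integer coefficients at exponent $0$) that distinguish $\mathbf{Oz}$ inside $\mathbf{No}$. Once $t^y$ is identified as infinitesimal precisely when $y < 0$ and $\mathbb{R}_0$ is recognized as the ``Archimedean core'' governing sub-unit arithmetic of $A$, the remainder of the argument is bookkeeping with Theorem \ref{initdoms} and Lemma \ref{archdom}.
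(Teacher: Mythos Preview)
Your argument is correct, but it takes a different route from the paper's. The paper does not invoke Theorem \ref{initdoms} at all: for the $(\Leftarrow)$ direction it simply cites \cite{EH7} for the initiality of $\mathbf{Oz}$ and then observes (via $\frac{1}{2}=\{0\,|\,1\}$) that an initial subdomain of $\mathbf{Oz}$ containing an element strictly between $0$ and $1$ would have to contain $\frac{1}{2}\notin\mathbf{Oz}$; for the $(\Rightarrow)$ direction it takes an element $a\notin\mathbf{Oz}$, forms its omnific integer part $b$ (zeroing out negative-exponent terms and taking the floor of the coefficient at exponent $0$), uses $b=\{b-1\,|\,b+1\}$ together with $b-1<a<b+1$ to conclude $b<_s a$, hence $b\in K$, hence $0<a-b<1$ lies in $K$ and forces $\frac{1}{2}\in K$. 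Your approach instead routes everything through the Hahn-series characterization: you reprove the initiality of $\mathbf{Oz}$ from Theorem \ref{initdoms}, and for the forward direction you read off $\Gamma\subseteq\{y\ge 0\}$ and $\mathbb{R}_0=\mathbb{Z}$ directly from cross-sectionality and Lemma \ref{archdom}. The paper's argument is more elementary and self-contained (it needs only the simplicity description of $\mathbf{Oz}$ and of $\frac{1}{2}$), while yours has the virtue of making the theorem an immediate corollary of the structural machinery already developed, and of giving an in-paper proof that $\mathbf{Oz}$ is initial rather than citing it.
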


\begin{proof}
Suppose $K$ is an initial subdomain of \textbf{Oz}. As \textbf{Oz} is an initial subdomain of \textbf{No} \cite [p. 3: Note 2]{EH7}, $K$ must be as well. To see that $K$ is discrete, suppose on the contrary there is an element in $K$ between 0 and 1. Since $\frac{1}{2}= \{0 \, | \, 1\}$,  it follows that $\frac{1}{2} \in K$. But this is impossible since $\frac{1}{2} \notin \textbf{Oz}$, as $\{\frac{1}{2}-1\, |\, \frac{1}{2}+1\}=0$. For the converse, suppose that there is a discrete initial subdomain $K$ of \textbf{No} containing some element $a\notin \textbf{Oz}$. Let $\sum_{\alpha < \beta} \omega^{y_{\alpha}} \cdot r_{\alpha}$ be the Conway name of $a$. Also let $b = \sum_{\alpha < \beta} \omega^{y_{\alpha}} \cdot r'_{\alpha}$ where $r'_\alpha=r_\alpha$ if $y_\alpha > 0$, $r'_\alpha= 0$ if $y_\alpha<0$ and $r'_\alpha$ is largest integer less than $r_\alpha$ if $y_\alpha=0$. Note that $b \in \textbf{Oz}$, so $b = \{b-1\, |\, b+1\}$. Moreover, since $b-1 < a < b+1$, $b <_s a$; and so, as $K$ is initial and $a\in K$, $b\in K$. As $K$ is a domain, $a-b\in K$; but $0<a-b<1$, so $\frac{1}{2}<_s a-b$ and, therefore, $\frac{1}{2} \in K$, which contradicts the assumption that $K$ is discrete.
\end{proof}
%------------------------------------------------
\section{Initial subgroups and subdomains that are convex}
%------------------------------------------------
Among the important subgroups and subdomains of ordered groups and fields are those that are convex. Using Theorems \ref{initgroups} and  \;\ref{initdoms}, we now identify the convex subgroups of initial subgroups of \textbf{No} that are themselves initial as well as the convex subdomains (i.e. the valuation rings) of initial subfields of \textbf{No} that are likewise initial. As we shall see, unlike the convex subgroups and subdomains of ordinary ordered groups and ordered fields,  the initial convex subgroups and subdomains of initial subgroups and subfields of \textbf{No} are always well ordered by inclusion. 

Let $A$ be a nontrivial initial subgroup of \textbf{No} and $\textbf{On}(A)=:\textbf{On}\cap{A}$ be its subtree of ordinals. Following \cite[Definition 19]{EH5}, $A$ is said to be $\alpha\textit{-Archimedean}$ if $\alpha$ is the height of $\textbf{On}(A)$. A nontrivial initial subgroup of \textbf{No} is $\alpha\text{-Archimedean}$ if and only if for each $x\in A$ there is a $\beta \in \textbf{On}(A)$ such that $-\beta < x < \beta$ \cite[Theorem 24]{EH5}. 
	
Ordinals of the form $\omega^\phi$ are said to be \textit {additively indecomposable} since they are precisely the ordinals $\lambda$ such that $\mu + \nu < \lambda$ for all ordinals $\mu,\nu < \lambda$, and ordinals of the form $\omega^{\omega^\phi}$ are said to be \textit{multiplicatively indecomposable} since they are precisely the ordinals $\lambda >1$ such that $\mu\nu < \lambda$ for all ordinals $\mu,\nu < \lambda$, where the just-said sums and products of ordinals are the familiar Cantorian operations. Every nontrivial initial subgroup (resp. initial subdomain) is $\omega^\phi$-Archimedean for some nonzero ordinal (resp. nonzero additively indecomposable ordinal) $\phi$; moreover, $A$ is Archimedean if and only if $A$ is $\omega$-Archimedean \cite[Theorem 24]{EH5}.

Let $A$ be an $\omega^{\phi}$-Archimedean initial subgroup of \textbf{No} and for each nonzero ordinal $ \tau \le \phi$, let $$A[\omega^{\tau}] =: \{x \in A: -\alpha < x < \alpha \;\text{for some}\; \alpha < \omega^{\tau}\}.$$

\begin{proposition}
\label{pp}
Let $A$ be an $\omega^{\phi}$-Archimedean initial subgroup of {\bf No} and $\tau$ be a nonzero ordinal $\le \phi \le \emph{\bf{On}}$.\newline
\rm{(i)} $A[\omega^{\tau}]=A$ {\em if and only if} $\tau = \phi$.\newline
\rm{(ii)} \em $A[\omega^{\tau}] = \{x \in A: -n\alpha < x < n\alpha \;\text{{\em for some}}\; n \in \mathbb{N} \; \text{{\em and some}} \;\alpha < \omega^{\tau}\}$.\newline
\rm{(iii)} \em The class of ordinals $< \omega^{\tau}$ is a cofinal subclass of $A[\omega^{\tau}]$.\newline
\rm{(iv)} \em For each leader $\omega^y \in  A$ there is a unique leader $\omega^{\alpha} \in A$ where $\alpha$ is an ordinal $< \phi$ such that  $ \omega^y \le \omega^{\alpha}$ and  $\omega^{\alpha} \le_s \omega^y$. If $ \omega^y \neq \omega^{\alpha}$, then $\omega^{\alpha}$ is the least ordinal $> \omega^y$. Moverover, if $\omega^{x} <_s \omega^y$, then $\omega^x \le  \omega^{\alpha}$.
\end{proposition}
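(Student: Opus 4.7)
The plan is to dispatch (i), (ii), and (iii) with short definitional arguments and reserve the bulk of the work for (iv), which is the substantive claim.

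For (i), the forward direction uses the $\omega^{\phi}$-Archimedean hypothesis directly: each $x\in A$ is bounded in absolute value by some $\beta\in\textbf{On}(A)$, and since $\textbf{On}(A)$ has height $\omega^{\phi}$, necessarily $\beta<\omega^{\phi}=\omega^{\tau}$. Conversely, if $\tau<\phi$, then $\omega^{\tau}\in\textbf{On}(A)\subseteq A$, yet no ordinal $<\omega^{\tau}$ exceeds $\omega^{\tau}$, so $\omega^{\tau}\notin A[\omega^{\tau}]$. For (ii), the $\supseteq$ inclusion is immediate with $n=1$, and $\subseteq$ reduces to the observation that $\omega^{\tau}$ is additively indecomposable: $\alpha<\omega^{\tau}$ implies $n\alpha<\omega^{\tau}$ for all positive integers $n$. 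For (iii), each $x\in A[\omega^{\tau}]$ is by definition bounded above by some ordinal $\alpha<\omega^{\tau}$, and every ordinal $\gamma<\omega^{\tau}\le\omega^{\phi}$ lies in $\textbf{On}(A)\subseteq A$ and, since $\omega^{\tau}$ is a limit, in $A[\omega^{\tau}]$ as well.

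For (iv), the plan is to identify the candidate $\omega^{\alpha}$ as the least ordinal $\ge\omega^{y}$. Such an ordinal exists and is strictly below $\omega^{\phi}$ by (iii) (so $\alpha<\phi$, whence $\omega^{\alpha}\in A$). The first nontrivial step is to show this least ordinal is additively indecomposable. Comparing the one-term Conway normal form of $\omega^{y}$ with the Cantor normal form $\omega^{\gamma_1}n_1+\cdots+\omega^{\gamma_k}n_k$ of a candidate ordinal upper bound $\beta$, the inequality $\omega^{y}\le\beta$ forces $y\le\gamma_1$; if $y=\gamma_1$, then $\omega^{y}=\omega^{\gamma_1}$ is already an additively indecomposable ordinal and equals $\beta$, while if $y<\gamma_1$, then $\omega^{\gamma_1}$ is an ordinal wedged between $\omega^{y}$ and $\beta$ (or equal to $\beta$), and minimality of $\beta$ forces $\beta=\omega^{\gamma_1}$. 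Setting $\alpha=\gamma_1$, the condition $\omega^{\alpha}\le_{s}\omega^{y}$ reduces via Proposition~\ref{thrm15ehr01}(i) to $\alpha\le_{s}y$, which I verify by a sign-expansion inspection: $\alpha$ is precisely the length of the initial $+$-run in the sign-expansion of $y$ (read as $0$ when $y\le 0$ with $y\ne 0$), and that initial segment is a tree-predecessor of $y$ by construction.

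Uniqueness and the two ``moreover'' clauses then fall out. If $\alpha'$ is another candidate, then $\alpha'\le_{s}y$ places $\alpha'$ in the linearly ordered initial segment of ordinal predecessors of $y$, and $y\le\alpha'$ pins it down to the same $\alpha$ identified above. The identification of $\omega^{\alpha}$ as the least ordinal $>\omega^{y}$ when $\omega^{y}\ne\omega^{\alpha}$ is exactly what the construction of $\beta$ yields in the case where $y$ is not an ordinal. The final clause reduces, via Proposition~\ref{thrm15ehr01}(i), to showing $x\le\alpha$ for every tree-predecessor $x$ of $y$; but any such $x$ has sign-expansion a proper initial subsequence of $y$'s, so it is either an ordinal $\le\alpha$ or lies in the left subtree of $\alpha$ and hence $<\alpha$. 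The main obstacle is the combined sign-expansion and Cantor-normal-form bookkeeping that simultaneously identifies $\alpha$, pins down additive indecomposability, and delivers the two tree-theoretic inequalities in (iv); parts (i)--(iii) are essentially formal by contrast.
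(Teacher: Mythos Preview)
Your proof is correct and follows essentially the same route as the paper. Parts (i)--(iii) are dispatched identically, and for (iv) both you and the paper identify $\alpha$ as the least ordinal $\ge y$ (equivalently, the length of the initial $+$-run in the sign-expansion of $y$) and then transfer to $\omega^{\alpha}$ via Proposition~\ref{thrm15ehr01}(i); the only cosmetic difference is that you first argue at the level of $\omega^{y}$ (using Cantor normal form to see that the least ordinal $\ge\omega^{y}$ is additively indecomposable) before dropping to the exponent, whereas the paper works with the exponent $y$ from the outset. One small slip: in (ii) your labels are reversed---it is the inclusion $A[\omega^{\tau}]\subseteq\{x: -n\alpha<x<n\alpha\}$ that is immediate with $n=1$, while $\supseteq$ is the direction needing additive indecomposability.
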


\begin{proof}
(i) is trivial, (ii) follows from the additively indecomposable nature of  $\omega^{\tau}$, and (iii) follows from the definition of $A[\omega^{\tau}]$ and the initial nature of $A$. For (iv), note that for each $z>0$ in \textbf{No}, there is a unique ordinal $\alpha$ such that $z \le \alpha$ and $\alpha \le_s z$; moreover, if $y \neq \alpha$, then $\alpha$ is the least ordinal $>y$.  But by \cite[Theorem 11]{EH5} and \cite[Proposition 3.6 (i)]{EH7} for all $z,y \in \textbf{No},\; z<_s y$ if and only if $\omega^z <_s  \omega^y$, which implies the first two parts of (iv). Finally, suppose $\omega^z <_s  \omega^y$. If $z$ is an ordinal, then plainly $\omega^z \le  \omega^\alpha$; and if $z$ is not an ordinal, then its sign-expansion begins with $\omega^\alpha$ pluses followed by a minus, which implies it is $< \omega^\alpha$, thereby completing the proof.
\end{proof}

\begin{theorem}
\label{GROUP}
Let $A$ be an $\omega^{\phi}$-Archimedean initial subgroup of \textbf{\emph{No}}. Then $K$ is a nontrivial initial convex subgroup of $A$ if and only if $K= A[\omega^{\tau}]$ for some additively indecomposable infinite ordinal $\omega^{\tau} \le \omega^{\phi}$. 
\end{theorem}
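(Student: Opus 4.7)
For the ``if'' direction, fix an additively indecomposable infinite ordinal $\omega^{\tau}\le\omega^{\phi}$ and set $K=A[\omega^{\tau}]$. Subgroup closure follows from Proposition \ref{pp}(ii) combined with additive indecomposability: if $|x|<n\alpha$ and $|y|<m\beta$ with $\alpha,\beta<\omega^{\tau}$, then $|x\pm y|<(n+m)(\alpha+\beta)<\omega^{\tau}$. Convexity is immediate from the definition, and $\mathbb{N}\subseteq K$ gives nontriviality. The delicate point, which I expect to be the main obstacle, is initiality. For $x>0$ in $K$, let $\gamma$ denote the least ordinal with $x\le\gamma$; as in the proof of Proposition \ref{pp}(iv), $\gamma\le_{s}x$, and if $x<\alpha<\omega^{\tau}$ then $\gamma\le\alpha<\omega^{\tau}$. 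For any $y<_{s}x$ the sign-expansion of $y$ is a proper initial segment of that of $x$; since $x$'s expansion begins with a $+$, one gets $y\ge 0$, and since the leading-plus count of $y$ is at most that of $x$ (which equals $\gamma$), one gets $y\le\gamma<\omega^{\tau}$, whence $y\in K$. The cases $x<0$ and $x=0$ are symmetric and trivial, respectively.

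For the converse, suppose $K$ is a nontrivial initial convex subgroup of $A$. Since the sign-expansion of $1$ is a single $+$, $1$ is a tree-predecessor of every positive surreal other than itself, so $1\in K$ and hence $\mathbb{Z}\subseteq K$. Because ordinals form the rightmost branch of $\langle\mathbf{No},<_{s}\rangle$, tree-predecessors of ordinals are ordinals; together with convexity this makes $K\cap\mathbf{On}$ an initial segment of $\mathbf{On}(A)$ containing $\mathbb{N}$. Put $\lambda:=\sup(K\cap\mathbf{On})$. Then $\lambda\notin K$, for otherwise $\lambda\in\mathbf{On}(A)$ forces $\lambda<\omega^{\phi}$ and hence $\lambda+1\in\mathbf{On}(A)\cap K$ by group closure, contradicting the definition of $\lambda$. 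Writing $\lambda=\alpha+\beta$ with $\alpha,\beta<\lambda$ would put $\alpha,\beta\in K$ and so $\lambda\in K$, a contradiction; therefore $\lambda=\omega^{\tau}$ for some $\tau\ge 1$ with $\omega^{\tau}\le\omega^{\phi}$.

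Finally, I show $K=A[\omega^{\tau}]$. If $x\in K$, then $|x|\in K$ by convexity, and the least ordinal $\gamma\ge|x|$ is either $|x|$ itself or (as in the proof of Proposition \ref{pp}(iv)) a tree-predecessor of $|x|$; either way $\gamma\in K\cap\mathbf{On}$ and $\gamma<\omega^{\tau}$, so $x\in A[\omega^{\tau}]$. Conversely, if $x\in A[\omega^{\tau}]$ then $-\alpha<x<\alpha$ for some $\alpha<\omega^{\tau}$, and $\pm\alpha\in K\cap\mathbf{On}$, so convexity forces $x\in K$. The remainder of the argument is bookkeeping with Proposition \ref{pp} and the ordinal arithmetic encoded in the definition of $A[\omega^{\tau}]$.
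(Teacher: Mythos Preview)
Your proof is correct and, for the initiality half of the ``if'' direction, takes a genuinely different and more elementary route than the paper. The paper establishes that $A[\omega^{\tau}]$ is initial by invoking Theorem~\ref{initgroups}: it checks that $A[\omega^{\tau}]$ is truncation closed and cross sectional, that its coefficient groups inherit the right properties from $A$, and then reduces the remaining work to showing that the exponent class $\{y:\omega^{y}\in K\}$ is initial, which is handled via Proposition~\ref{pp}(iv). Your argument bypasses the Hahn-series machinery entirely: you work directly with sign expansions and observe that any tree-predecessor $y$ of a positive $x\in K$ is bounded above by the leading-plus count $\gamma$ of $x$, which is an ordinal below $\omega^{\tau}$. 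This is shorter, self-contained, and makes no appeal to Theorem~\ref{initgroups}. For the ``only if'' direction, your construction of $\omega^{\tau}$ as $\sup(K\cap\mathbf{On})$ is essentially a hands-on version of the paper's appeal to the fact that $K$ is $\omega^{\tau}$-Archimedean; the two arguments are close in spirit.

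Two small points of care. First, when you argue that $\lambda$ is additively indecomposable, the equation $\lambda=\alpha+\beta$ refers to ordinal addition, while ``$\alpha,\beta\in K$ and so $\lambda\in K$'' uses surreal (Hessenberg) addition; these differ in general. The fix is immediate: the Hessenberg sum $\alpha\oplus\beta$ dominates the ordinal sum, so $\alpha\oplus\beta\ge\lambda$ is an ordinal in $K$, contradicting $\lambda=\sup(K\cap\mathbf{On})\notin K$. Second, your formation of $\sup(K\cap\mathbf{On})$ tacitly assumes this class is a set; when $\phi=\mathbf{On}$ and $K=A$ one should simply note directly that $K=A=A[\omega^{\phi}]$ (Proposition~\ref{pp}(i)) before passing to the case where $K\cap\mathbf{On}$ is a proper initial segment of $\mathbf{On}(A)$. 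Also, ``$\pm\alpha\in K\cap\mathbf{On}$'' should read ``$\alpha\in K\cap\mathbf{On}$, hence $\pm\alpha\in K$''.
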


\begin{proof}
First suppose  $K$ is a nontrivial initial convex subgroup of $A$. Since $K$ is a nontrivial initial subgroup of $A$, $K$ is $\omega^{\tau}$-Archimedean for some nonzero ordinal $\tau \le \phi$. Moreover, since $K$ is a convex subgroup of $A$, $A[\omega^{\tau}] \subseteq{K}$. Furthermore, if $A[\omega^{\tau}] \neq{K}$, there is an $x\in K$ such that $A[\omega^{\tau}] < \{x\} <\omega^{\tau}$. But then $\omega^{\tau} <_s x$, which implies $K$ is not initial, since $\omega^{\tau} \notin K$; and so $K=A[\omega^{\tau}]$. 

Now suppose $K= A[\omega^{\tau}]$ for some additively indecomposable infinite ordinal $\omega^{\tau} \le \omega^{\phi}$. If $\tau=\phi$, then $K=A$, which is a trivial initial convex subgroup of $A$. Now suppose $1\le\tau < \phi$. In virtue of the definition of $A[\omega^{\tau}]$, $K$ is a convex subclass of $A$. Moreover, by Proposition \ref{pp}(ii) and the convex nature of $K$, every element of $A$ that is Archimedean equivalent to some $x \in K$ is likewise in $K$. Plainly then, $x+y \in K$ whenever $x, y \in K$, since $x+y$ is in the Archimedean class containing $\omega^z$, where $z$ is the maximal member in the supports of $x$ and the supports of $y$, which shows $K$ is a group. To establish $K$ is initial, first note that since $A$ is cross sectional and closed under truncation, it follows from the fact that $K$ is a convex subgroup of $A$ (in which every element of $A$ that is Archimedean equivalent to some $x \in K$ is likewise in $K$) that $K$ is also cross sectional and closed under truncation. In addition, since $K$ is a convex subgroup of $A$ and $A$ is initial, the $y$-coefficient groups of $K$ satisfy conditions (ii) and (iii) of Theorem \ref{initgroups}. Therefore, in virtue of Theorem \ref{initgroups}, to complete the proof it remains to show $\{y \in \textbf{No}: \omega^y \in K\}$ is an initial subtree of \textbf{No}. For this purpose, suppose $\omega^{y} \in K$ and further suppose $\omega^{x} <_s \omega^y$.  Since $x<_s y$ if and only if  $\omega^{x} <_s \omega^y$ for all $x,y \in \bf{No}$, it suffices to show $\omega^x\in K$. 
To this end, note that by Proposition \ref{pp}(iii), there is an ordinal $\beta \in K$ such that $\omega^y < \beta< \omega^\tau$. Moreover, by Proposition \ref{pp}(iv), there is an ordinal  $\omega^{\alpha}$ such that either $\omega^{\alpha}=\omega^y$ or $\omega^{\alpha}$ is the least ordinal $> \omega^y$ and $\omega^x \le  \omega^{\alpha}$. Plainly then, $\omega^x \le \omega^{\alpha} \leq \beta$. And so, since $K$ is a convex subgroup of $A$, $\omega^x  \in K$.
\end{proof}

\begin{theorem}
\label{DOMAIN}
Let $A$ be an $\omega^{\phi}$-Archimedean initial subfield of \textbf{\emph{No}}. Then $K$ is an initial convex subdomain of $A$ if and only if $K= A[\omega^{\tau}]$ for some multiplicatively indecomposable ordinal $\omega^{\tau} \le \omega^{\phi}$. 
\end{theorem}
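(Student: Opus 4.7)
The plan is to mirror the proof of Theorem \ref{GROUP}, replacing additive indecomposability with multiplicative indecomposability. A preliminary observation underpins everything: for an ordinal of the form $\omega^{\tau}$, multiplicative indecomposability in the Cantorian sense (i.e., $\mu\nu<\omega^{\tau}$ whenever $\mu,\nu<\omega^{\tau}$) is equivalent to multiplicative indecomposability in the Hessenberg sense (i.e., $\mu\otimes\nu<\omega^{\tau}$, where $\otimes$ is the natural product), and both amount to $\tau$ being additively indecomposable. This is verified through Cantor normal forms and the identity $\omega^{a}\otimes\omega^{b}=\omega^{a\oplus b}$. The equivalence is crucial because the surreal product on ordinals coincides with the natural product rather than the Cantor product.

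For the $(\Leftarrow)$ direction, suppose $\omega^{\tau}\leq\omega^{\phi}$ is multiplicatively indecomposable. Then $\omega^{\tau}$ is automatically additively indecomposable and infinite, so Theorem \ref{GROUP} gives that $K=A[\omega^{\tau}]$ is a nontrivial initial convex subgroup of $A$. Since $1\in K$, it suffices to verify closure under multiplication. Given $x,y\in K$, choose ordinals $\alpha_{1},\alpha_{2}<\omega^{\tau}$ with $|x|<\alpha_{1}$ and $|y|<\alpha_{2}$, and set $\alpha=\max(\alpha_{1},\alpha_{2})$. By the monotonicity of surreal multiplication on positive elements, $|xy|=|x|\cdot|y|<\alpha\cdot\alpha$, and this surreal square is the Hessenberg square of the ordinal $\alpha$, hence strictly less than $\omega^{\tau}$ by the preliminary observation. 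Thus $xy\in K$.

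For the $(\Rightarrow)$ direction, let $K$ be an initial convex subdomain of $A$. Then $K$ is in particular a nontrivial initial convex subgroup of $A$ (since $1\in K$), so by Theorem \ref{GROUP} there is an additively indecomposable infinite ordinal $\omega^{\tau}\leq\omega^{\phi}$ with $K=A[\omega^{\tau}]$. Assume for contradiction that $\omega^{\tau}$ is not multiplicatively indecomposable, i.e., that $\tau$ is not additively indecomposable. Then $\tau=\alpha+\beta$ in Cantor form with $0<\beta<\tau$ (and so $\alpha<\tau$ as well). The ordinals $\omega^{\alpha}$ and $\omega^{\beta}$ are strictly less than $\omega^{\tau}$, so by Proposition \ref{pp}(iii) both lie in $K$. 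Their surreal product equals the natural product $\omega^{\alpha\oplus\beta}$; since the Hessenberg sum dominates the Cantor sum, $\alpha\oplus\beta\geq\alpha+\beta=\tau$, and therefore $\omega^{\alpha}\cdot\omega^{\beta}\geq\omega^{\tau}$. This product cannot lie in $K=A[\omega^{\tau}]$, contradicting the closure of $K$ under multiplication.

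The principal obstacle is bookkeeping: multiplicative indecomposability is phrased in Cantorian terms, while surreal multiplication on ordinals follows Hessenberg rules. Once the two notions of multiplicative indecomposability are reconciled for ordinals of the form $\omega^{\tau}$, the argument reduces to a short supplement to Theorem \ref{GROUP}, using only the monotonicity of surreal multiplication and the identification of the surreal product of ordinals with their natural product.
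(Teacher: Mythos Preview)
Your proof is correct and follows essentially the same strategy as the paper's: reduce to Theorem \ref{GROUP} for the group-theoretic part, then show that closure under multiplication of $A[\omega^{\tau}]$ holds precisely when $\omega^{\tau}$ is multiplicatively indecomposable. The one substantive difference is that you make explicit the reconciliation between Cantorian and Hessenberg multiplicative indecomposability (needed because surreal multiplication of ordinals is the natural product), whereas the paper leaves this implicit; your treatment is more careful on this point, but the underlying argument is the same.
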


\begin{proof}
Since $A$ is a field, it follows that for each $\omega^{y} \in K$, $\{r \in \mathbb{R}: \omega^{y}.r \in K\} = \{r \in \mathbb{R}: \omega^{0}.r \in K\}$ and so, for each leader $\omega^{y} \in K$, $\mathbb{D}$ is a subdomain of $\{r \in \mathbb{R}: \omega^{y}.r \in K\}$. Accordingly, in virtue of Theorem \ref{GROUP} and Theorem \ref{initdoms}, to complete the proof it suffices to show that $K$ (which is convex) is a subdomain of $A$ if and only if $K= A[\omega^{\tau}]$ for some multiplicatively indecomposable ordinal $\omega^{\tau} \le \omega^{\phi}$. If $\omega^{\tau}$ is not multiplicatively indecomposable, there are ordinals $\alpha,\beta \in K$ where $\alpha\beta>\omega^{\tau}$ and, hence, $\alpha\beta \notin K$, which shows $K$ is not a domain. Now suppose $\omega^{\tau}$ is multiplicatively indecomposable. If $\tau=1$, $K$ consists of the finitely bounded members of $A$, in which case $K$ is obviously a subdomain of $A$. Moreover, if $1<\tau < \phi$, then $\{\omega^{\beta}: \beta < \tau\}$ is a cofinal subclass of $K$ without a greatest member. Accordingly, if $x,y \in K$, $|x |\le$ some $\omega^{\beta} \in K$ and  $|y| \le$ some $\omega^{\gamma} \in K$, where $\beta, \gamma$ are ordinals $<\tau$, and so $|xy| \le \omega^{\beta + \gamma}<\omega^{\tau}$. But then $\omega^{\beta + \gamma}$ and, hence, $|xy|$ are in $K$, which suffices to show $K$ is a domain.
\end{proof}

\begin{remark*}

{\rm Since every real-closed ordered field is isomorphic to an initial subfield of \textbf{No}, it is natural to inquire if this is true for real-closed ordered domains more generally, the latter of which coincide with the convex subdomains of real-closed ordered fields \cite{CD}. However, since the convex subdomains of real-closed ordered fields are not in general well-ordered by inclusion, Theorem \ref{DOMAIN} implies this is not the case.}

\end{remark*}

\section{Optimality results and an open question}

As we mentioned above, in \cite{EH5} it was shown that every divisible
ordered abelian group (real-closed ordered field) is isomorphic to an initial subgroup
(initial subfield) of  ${\bf No}$. The following result shows that in a important sense these results are optimal. 

Let $T^{OAG}$ and $T^{DOAG}$ be the theories of ordered abelian groups and divisible ordered abelian groups in the language $\{\leq,+,0\}$ of ordered additive groups, and let $T^{OF}$ and $T^{RCF}$ be the theories of ordered fields and real-closed ordered fields in the language $\{\leq,+,\cdot,0,1\}$ of ordered fields. 

\begin{theorem}
\label{OP}

\vspace{5pt}
\noindent
(i) If $T^{OAG}  \subseteq   T \subseteq T^{DOAG}$, then every model of $T$ is isomorphic to an initial subgroup of \textbf{\em {No}} if and only if $T = T^{DOAG}$.

\vspace{5pt}
\noindent
(ii) If $  T^{OF}  \subseteq   T \subseteq T^{RCF}$, then every model of $T$ is isomorphic to an initial subfield of \textbf{\em {No}} if and only if $T = T^{RCF}$.

\end{theorem}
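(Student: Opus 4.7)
The forward direction of both (i) and (ii) is immediate from the cited results of \cite{EH5}: every divisible ordered abelian group (respectively, every real-closed ordered field) is isomorphic to an initial subgroup (respectively, initial subfield) of $\textbf{No}$, so every model of $T^{DOAG}$ (respectively $T^{RCF}$) is initial.

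For the reverse direction of (i), I would argue contrapositively: assuming $T \subsetneq T^{DOAG}$, construct a model of $T$ not isomorphic to any initial subgroup of $\textbf{No}$. Since $T^{DOAG}$ is axiomatized over $T^{OAG}$ by the divisibility axioms $D_p := \forall x\,\exists y\,(py=x)$ for primes $p$, and $T \neq T^{DOAG}$, there is a prime $p$ with $T \cup \{\neg D_p\}$ consistent. The plan is to use compactness to produce a model $N \models T$ whose Hahn series representation (via Proposition \ref{prop 5}) has some coefficient group $\mathbb{R}_y$ that fails to be an initial subgroup of $\mathbb{R}$. By Lemma \ref{initarchgroups}, every initial subgroup of $\mathbb{R}$ is either trivial, of the form $\{z/2^m : z \in \mathbb{Z}\}$, or a superset of $\mathbb{D}$; thus it suffices to engineer $\mathbb{R}_y$ to contain a non-dyadic element (ruling out the cyclic case) while missing some dyadic rational (ruling out $\mathbb{D} \subseteq \mathbb{R}_y$).

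Concretely, I would augment the language with constants $e, f$ and add axioms forcing (a) $e > 0$ and $e$ witnesses failure of $p$-divisibility at the leading coefficient of its Archimedean class; (b) $f$ is Archimedean-equivalent to $e$ but $2^m f \neq z e$ for all $m \in \mathbb{N}$ and $z \in \mathbb{Z}$, so that the projection of $f$ to $\mathbb{R}_y$ is a non-dyadic element $\mathbb{Z}$-independent of the projection of $e$. By compactness, each finite subset of the augmented theory can be realized in a suitable elementary extension of a non-$p$-divisible model of $T$, so the full theory is consistent. A model $N$ thereof has coefficient group $\mathbb{R}_y$ simultaneously containing a non-dyadic element and missing a dyadic rational, whence by Lemma \ref{initarchgroups} $\mathbb{R}_y$ is not initial in $\mathbb{R}$, and so $N$ is not isomorphic to an initial subgroup of $\textbf{No}$.

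The hard part will be establishing (a): arranging the compactness witness so that the failure of $p$-divisibility projects cleanly to the leading term of $e$'s Archimedean class rather than being absorbed into lower-order corrections; this requires carefully controlling the structure of the compactness witness to ensure that $e$ plays the role of a cross-sectional generator. One then splits into cases $p = 2$ and $p \neq 2$: the $p = 2$ case uses the $\{z/2^m\}$ clause of Lemma \ref{initarchgroups} directly (since failure of 2-divisibility at the top of $\mathbb{R}_y$ immediately excludes $\mathbb{D} \subseteq \mathbb{R}_y$), whereas the $p \neq 2$ case invokes condition (iii) of Theorem \ref{initgroups} to force a conflict between the obligatory containment $\mathbb{D} \subseteq \mathbb{R}_y$ at right-predecessor positions and the required absence of $p$-th roots. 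Part (ii) then follows the same template with Corollary \ref{initfields} and Lemma \ref{archdom} replacing Theorem \ref{initgroups} and Lemma \ref{initarchgroups}; since every ordered field is additively divisible, the distinguishing axioms between $T$ and $T^{RCF}$ are real-closure axioms (existence of square roots, roots of odd-degree polynomials), and an analogous compactness construction forces the coefficient field of some Archimedean class to violate the initiality classification.
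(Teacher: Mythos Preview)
Your proposal has a genuine gap. The paper's proof takes a completely different route: it builds a \emph{class-sized} model $M'$ of $T$ as the union of an elementary chain of increasingly saturated set models, so that $M'$ is an $\eta_{\mathbf{On}}$-ordering. By Propositions~\ref{P:Bd} and~\ref{P:eta}, the only initial subtree of $\mathbf{No}$ that is an $\eta_{\mathbf{On}}$-ordering is $\mathbf{No}$ itself; since $\mathbf{No}$ is divisible (respectively real-closed) and $M'$ is not, $M'$ cannot be isomorphic to any initial substructure. No analysis of coefficient groups or Hahn representations is needed.

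Your approach, by contrast, attempts to produce a \emph{set-sized} counterexample via compactness. But this is precisely the content of the paper's open Question~1, and the paragraph following it explains that such an argument is not known to work for arbitrary $T$. There are two concrete obstructions. First, the coefficient group $\mathbb{R}_y$ is not an invariant of the abstract model: it depends on the choice of cross-section in the Hahn embedding, so showing that \emph{one} representation has a bad $\mathbb{R}_y$ does not preclude the existence of another embedding satisfying the conditions of Theorem~\ref{initgroups}. Your compactness constraints on $e$ and $f$ are phrased in terms of ``leading coefficients,'' which are not first-order definable in $\{\leq,+,0\}$, and you acknowledge that arranging the non-divisibility to land at the leading term (rather than being absorbed by lower-order terms) is ``the hard part'' without indicating how to do it. Second, for part (ii) your transfer to fields fails outright: by Corollary~\ref{initfields} the obstruction to initiality for ordered fields lies entirely in the value group $\Gamma$, not in the coefficient fields (which automatically contain $\mathbb{D}$), so ``forcing the coefficient field of some Archimedean class to violate the initiality classification'' is not a viable strategy there.
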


\begin{proof}

In light of the above-mentioned results on divisible ordered abelian groups and real-closed fields from \cite{EH5}, it remains to consider the cases where $T \neq T^{DOAG}$ and $T \neq T^{RCF}$. Let $T^{OAG}  \subseteq   T \subset T^{DOAG}$ and $M$ be a countable model of $T$. There is an elementary chain $M_\alpha$ ($\alpha < {\bf On}$) of models of $T$ such that for each $\alpha $, $M_\alpha$ is an $\omega_{\alpha+1}$-saturated elementary extension of $M$ of power $2^{\omega_\alpha}$ \cite[Lemma 5.1.4]{CK}. Since each $M_\alpha$ is an $\eta_{\alpha+1}$-ordering  (\cite[Page 369]{CK}; also see \cite{ALK}), the union $M'$ of the chain is a model of $T$ that is an $\eta_{\bf On}$-ordering (see Proposition \ref{P:eta}). However, since {\bf {No}} is a lexicographically ordered full binary tree and no lexicographically ordered binary tree contains a proper initial ordered subtree that is isomorphic to itself \cite[Lemmas 1 and 2]{EH5}, it follows from Propositions \ref{P:Bd} and \ref{P:eta} that an initial ordered subtree of {\bf {No}} is an $\eta_{\bf On}$-ordering if and only if it is {\bf {No}} itself. But since {\bf {No}} is divisible and $M'$ is not, there is no isomorphism of $M'$ onto an initial subgroup of {\bf {No}}. Except for trivial modifications, the same argument applies to (ii), which suffices to prove the theorem. 
\end{proof}

The above proof of Theorem \ref{OP} makes critical use of class models. Using a classical result from the theory of saturated models \cite[Lemma 5.1.5]{CK} together with a natural generalization of Theorem 4 of (\cite{EH5}, \cite{EH5.2}), a variation of the above proof shows that an analog of Theorem \ref{OP} that applies solely to structures whose universes are sets can be established in NBG supplemented with the existence of an inaccessible cardinal $< {\bf On}$ or an instance of GCH. This naturally suggests:

\begin{question}

Can an analog of Theorem \ref{OP} that applies solely to ordered abelian groups and ordered fields whose universes are sets be established in \rm{NBG}?

\end{question}

An ordered field $K$ is said to be $n$-\emph{real-closed} \cite[page 327]{B} if every polynomial of degree at most $n$ admitting a root in a real closure of $K$ admits a root in $K$. Boughattas \cite{B} has shown that, for each positive integer $n$, there is a model of the theory of $n$-real-closed fields whose universe is a set that does not have an integer part. Therefore, since every initial subfield of {\bf {No}} has a canonical integer part \cite[Theorem 25]{EH5}, it follows that for each positive integer $n$, there is a model of the theory of $n$-real-closed fields whose universe is a set that is not isomorphic to an initial subfield of {\bf {No}}. This, however, does not provide a positive answer to the field portion of Question 1 since there are theories of ordered fields that are not equivalent to the theory of $n$-real-closed fields for any $n$. One can also find theories of ordered abelian groups having models whose universe are sets that are not isomorphic to initial subgroups of {\bf {No}}. For example, if we say that an ordered abelian group is $n$-\emph{divisible} if every element is divisible by $n$, then for each prime $p>2$ the $p$-divisible ordered abelian group generated by 1 is not isomorphic to an initial subgroup of {\bf{No}}. However, we are not aware of a proof that applies to every theory of ordered abelian groups lacking full divisibility, which leaves the group portion of Question 1 open as well.

\section{Further open questions}

Unlike discrete initial subdomains of {\bf{No}}, discrete initial subgroups of {\bf{No}} need not be subgroups of {\bf{Oz}}. A case in point is the subgroup of  {\bf{No}} consisting of all elements of the form $d+(a/\omega)$, where $d \in \bD$ and $a \in \bZ$. On the other hand, this discrete initial subgroup of {\bf{No}} is isomorphic to an initial subgroup of {\bf{Oz}}, as is evident from the mapping $f(d+(a/\omega))=\omega.d + a$ for all $d \in \bD$ and all $a \in \bZ$. This motivates

\begin{question}

Is every discrete initial subgroup of \emph{\bf{No}} isomorphic to an initial subgroup of \emph{\bf{Oz}}? 

\end{question}

\begin{question}
What is a set of conditions that are individually necessary and collectively sufficient for an arbitrary ordered monoid to be isomorphic to an initial submonoid of \emph{\bf{No}}?
\end{question}

%------------------------------------------------

%------------------------------------------------
\end{document}